
\documentclass{article}%
\usepackage{amsmath}
\usepackage{amsfonts}
\usepackage{amssymb,float}
\usepackage{latexsym}
\usepackage[latin1]{inputenc}
\usepackage{amsmath}
\usepackage{amssymb,float}
\usepackage{latexsym}
\usepackage{epstopdf}
\usepackage{geometry}
\usepackage[active]{srcltx}
\usepackage{graphicx}
\usepackage{epstopdf}
\usepackage{enumerate}
\usepackage[ bookmarks=true,         bookmarksnumbered=true, colorlinks=true,
pdfstartview=FitV, linkcolor=blue, citecolor=blue, urlcolor=blue]{hyperref}
\usepackage{amssymb}%
\setcounter{MaxMatrixCols}{30}
\providecommand{\U}[1]{\protect\rule{.1in}{.1in}}
\topmargin -2cm
\oddsidemargin -0.06cm
\evensidemargin -0.06cm
\textwidth 16.42cm
\textheight 23.96cm
\parskip 7pt

\newtheorem{theorem}{Theorem}[section]

\newtheorem{lemma}{Lemma}[section]

\newtheorem{remark}{Remark}[section]

\newenvironment{proof}[0]{\paragraph{Proof.}}{\rule{0.5em}{0.5em}}
\begin{document}
\noindent This paper will appear in the special  number\\ dedicated to the memory of Murrey Rosenblatt  \\( Journal of Time Series Analysis).
\begin{center}
{\LARGE A Local Limit Theorem for Linear Random Fields}

\bigskip\bigskip

Timothy Fortune$^{a}$, Magda Peligrad$^{b}$ and Hailin Sang$^{c}$
\end{center}

\bigskip

$^{a}$ Department of Statistics, University of Connecticut, Storrs, CT
06269, USA \newline timothy.fortune@uconn.edu

$^{b}$ Department of Mathematical Sciences, University of Cincinnati,
Cincinnati, OH 45221, USA\newline E-mail address: peligrm@ucmail.uc.edu

$^{c}$ Department of Mathematics, University of Mississippi, University, MS
38677, USA \newline sang@olemiss.edu

\bigskip

\noindent\textbf{Keywords}: \noindent linear random fields, local limit
theorem, long memory.

\bigskip

\noindent\textbf{2010 Mathematics Subject Classification}: Primary 60F05;
Secondary 62M10, 60G10, 62G05.

\begin{center}
\bigskip

\textbf{Abstract}
\end{center}

In this paper, we establish a local limit theorem for linear fields of random
variables constructed from independent and identically distributed innovations
each with finite second moment. When the coefficients are absolutely summable
we do not restrict the region of summation. However, when the coefficients are
only square-summable we add the variables on unions of rectangle and we impose
regularity conditions on the coefficients depending on the number of
rectangles considered. Our results are new also for the dimension $1$, i.e.
for linear sequences of random variables. The examples include the
fractionally integrated processes for which the results of a simulation study
is also included.

\section{Introduction}

Consider independent and identically distributed (i.i.d.) standard normal
random variables $\{Z_{k}\}_{k=1}^{n}$ and their sum $S_{n}=\sum_{k=1}%
^{n}Z_{k}$. In this context, we can define a sequence of measures given by
\begin{equation}
\mu_{n}(a,b)=\sqrt{2\pi n}\;P(S_{n}\in(a,b))=\int_{a}^{b}e^{-\frac{1}{2n}
x^{2}}\;dx, \label{3}%
\end{equation}
and with this specific form, one can easily see that the integrand converges
to one as $n\rightarrow\infty$. This sequence of measures therefore converges
to Lebesgue measure. The result is also true for the situation when
$\{Z_{k}\}_{k=1}^{n}$ is merely a sequence of i.i.d. random variables
satisfying the central limit theorem (CLT). A result such as this is called a
local limit theorem. A local limit theorem is much more delicate than the
associated CLT.

Local limit theorems have been studied intensively for\ the case of lattice
random variables and the case of non-lattice random variables. The lattice
case means that there exists $v>0$ and $a\in\mathbb{R}$ such that the values
of $Z_{0}$ are concentrated on the lattice $\{a+kv:k\in\mathbb{Z}\}$, whereas
the non-lattice case means that no such $a$ and $v$ exists. In this paper, we
consider the non-lattice case.

For sequences of i.i.d. random variables, the local limit theorem in the
non-lattice case is due to Shepp (1964) and the case of i.i.d. random
vectors is considered by Stone (1965). We also refer the reader to the
books by Ibragimov and Linnik (1971), Petrov (1975), and
Gnedenko (1962). Some papers containing classes of independent non
identically distributed random variables include Mineka and Silverman
(1970), Shore (1978) and Maller (1978). For more recent
results we mention the paper by Dolgopyat (2016) and the references therein.

Linear random fields (also known in the statistical literature as spatial
linear processes) have been extensively studied in probability and statistics.
For example, Mallik and Woodroofe (2011) studied the CLT for linear
random fields, and Sang and Xiao (2018) established exact moderate
and large deviation asymptotics for linear random fields under moment or
regularly varying tail conditions by extending the methods for linear
processes in Peligrad et al. (2014). With a conjugate method,
Beknazaryan, Sang, and Xiao (2019) studied the Cramér type
moderate deviation for partial sums of linear random fields. We refer to Sang
and Xiao (2018) for a brief review of the study of asymptotic
properties of linear random fields and to Koul, Mimoto, and Surgailis
(2016), Lahiri and Robinson (2016) and the references therein for
recent developments in statistics. However, to the best of our knowledge, the
local limit results for linear random fields, or even for one dimensional
indexed linear processes, have not yet been established in the literature.

In this paper, we consider linear random fields of the form
\begin{equation}
X_{j}=\sum_{i\in{\mathbb{Z}}^{d}}a_{i}\varepsilon_{j-i} \label{deflin}%
\end{equation}
defined on ${\mathbb{Z}}^{d}$, where the innovations $\varepsilon_{i}$ are
i.i.d. random variables with mean zero $({{\mathbb{E}}\,}\varepsilon_{i}=0)$,
finite variance $({{\mathbb{E}}\,}\varepsilon_{i}^{2}=\sigma_{\varepsilon}%
^{2})$, and non-lattice distribution and where the collection $\{a_{i}%
:i\in{\mathbb{Z}}^{d}\}$ of real coefficients satisfies
\begin{equation}
\sum\nolimits_{i\in{\mathbb{Z}}^{d}}a_{i}^{2}<\infty. \label{cond coef}%
\end{equation}
As a matter of fact, the field $X_{j}$ given in (\ref{deflin}) exists in
$L^{2}({\mathbb{R}})$ and almost surely if and only if (\ref{cond coef}%
)\ is satisfied. We say that the process has \textit{long memory (long range
dependence)} if $\sum\limits_{i\in{\mathbb{Z}}^{d}}|a_{i}|=\infty$.

Let $\Gamma_{n}^{d}$ be a sequence of finite subsets of ${\mathbb{Z}}^{d}$,
and define the sum
\begin{equation}
S_{n}=\sum_{j\in\Gamma_{n}^{d}}X_{j} \label{S_n}%
\end{equation}
with variance
\begin{equation}
B_{n}^{2}=\mathrm{Var}(S_{n}). \label{B_n}%
\end{equation}
We may express (\ref{deflin}) as
\[
X_{j}=\sum_{i\in{\mathbb{Z}}^{d}}a_{j-i}\varepsilon_{i},
\]
from which it is easily apparent that
\[
\mathrm{var}{(}X_{j})=\sigma_{\varepsilon}^{2}\sum_{i\in{\mathbb{Z}}^{d}}%
a_{i}^{2}.
\]
The sum $S_{n}=\sum\limits_{j\in\Gamma_{n}^{d}}X_{j},$ expressed as an
infinite linear combination of the innovations, is given by
\begin{equation}
S_{n}=\sum\limits_{i\in{\mathbb{Z}}^{d}}b_{n,i}\;\varepsilon_{i}, \label{rfsn}%
\end{equation}
where
\[
b_{n,i}=\sum\limits_{j\in\Gamma_{n}^{d}}a_{j-i},
\]
and similar to our earlier observation,
\[
B_{n}^{2}=\mathrm{Var}(S_{n})=\sigma_{\varepsilon}^{2}\sum_{i\in{\mathbb{Z}%
}^{d}}b_{n,i}^{2}.
\]
Without loss of generality, throughout the paper we assume that
$\sigma_{\varepsilon}^{2}=1$. Note that, by the representation (\ref{rfsn}%
),\ $S_{n}$ can be expressed as a sum of independent variables. However, the
local limit theorems available for sums of independent random variables that
are not identically distributed involve rather strong degrees of stationarity
which are not satisfied by (\ref{rfsn}). Building on the previous work of Shore (1978), we are able to show that the local limit theorem holds for all
the situations including the long memory linear random fields, assuming
reasonable requirements of the innovations and of the sets $\Gamma_{n}^{d}$.

As a matter of fact, we shall establish the following uniform local limit
theorem: For all continuous complex-valued functions $h(x)$ with $|h|\in L^{1}%
(\mathbb{R})$ and with Fourier transform $\hat{h}$ real and with compact
support,
\begin{equation}
\lim_{n\rightarrow\infty}\sup_{u\in\mathbb{R}}\bigg|\sqrt{2\pi}B_{n}%
Eh(S_{n}-u)-[\exp(-u^{2}/2B_{n}^{2})]\int h(x)\lambda
(dx)\bigg|=0,\label{LCLTnonlat}%
\end{equation}
where $\lambda$ is the Lebesgue measure. Here we require that $B_{n}%
\rightarrow\infty$ as $n\rightarrow\infty$. By arguments in Section 4 of
Hafouta and Kifer  (2016) this result implies that (\ref{LCLTnonlat}%
)\ also holds for the class of real continuous functions with compact support
and by the Theorem 10.7 in Breiman (1992) it follows that
\[
\lim_{n\rightarrow\infty}\sup_{u\in\mathbb{R}}\bigg|\sqrt{2\pi}B_{n}P(a+u\leq
S_{n}\leq b+u)-[\exp(-u^{2}/2B_{n}^{2})](b-a)\bigg|=0,
\]
for any $a<b$. In particular, since $B_{n}\rightarrow\infty$ as $n\rightarrow
\infty$, then for fixed $A>0$,
\[
\lim_{n\rightarrow\infty}\sup_{|u|\leq A}\bigg|\sqrt{2\pi}B_{n}P(a+u\leq
S_{n}\leq b+u)-(b-a)\bigg|=0.
\]
If we further take $u=0$, then,
\[
\lim_{n\rightarrow\infty}\sqrt{2\pi}B_{n}P(S_{n}\in\lbrack a,b])=b-a.
\]
In other words, the sequence of measures $\sqrt{2\pi}B_{n}P(S_{n}\in\lbrack
a,b])$ of the interval $[a,b]$ converges to Lebesgue measure.

It should be noted that the local limit theorem, as formulated in
(\ref{LCLTnonlat}), is useful to the study of recurrence conditions for
$S_{n}$, as explained in Orey  (1966) and Mineka and Silverman
 (1970).

The paper is organized as follows. In Section 2 we state and comment on the
results, which include the long memory case. Section 3 is dedicated to
examples of long memory time series to which we can apply the local limit
theorem stated in the previous section. In Section 4 we summarize the result
of a simulation study, designed to analyze the performance of our asymptotic
local theorem for a finite sample. Finally, Section 5 contains the proof of
the main result.

A few remarks about notation and terms used in the paper follow. In
constructing the sum $S_{n}$ that we analyze in this paper, we make use of a
sequence $\Gamma_{n}^{d}$ of subsets of ${\mathbb{Z}}^{d}$. For use with the
long memory case, for each $n$, we will construct the sequence $\Gamma_{n}%
^{d}$ of sets using a union of rectangles, whose dimensions could depend on
$n$. For $\underline{n}(w)=\underline{n}(w,n)=(\underline{n}_{1}%
(w),\underline{n}_{2}(w),\cdots,\underline{n}_{d}(w))\in{\mathbb{Z}}^{d}$ and
$\overline{n}(w)=(\overline{n}_{1}(w),\overline{n}_{2}(w),\cdots,\overline
{n}_{d}(w))\in{\mathbb{Z}}^{d}$ with $\underline{n}(w)\leq\overline{n}(w)$,
where $1\leq w\leq J_{n}$, put $\Gamma_{n}^{d}(w)=\prod_{\ell=1}%
^{d}[\underline{n}_{\ell}(w),\overline{n}_{\ell}(w)]\cap{\mathbb{Z}}^{d}.$ Any
set of this form will be called a \textit{discrete rectangle}. In general, we
require the index sets to be of the form
\begin{equation}
\Gamma_{n}^{d}=\bigcup\limits_{w=1}^{J_{n}}\Gamma_{n}^{d}(w), \label{defgamma}%
\end{equation}
where $\{\Gamma_{n}^{d}(w)\}_{w=1}^{J_{n}}$ is a pairwise disjoint family of
discrete rectangles. Throughout the paper, we demand that $|\Gamma_{n}%
^{d}|\rightarrow\infty$ as $n\rightarrow\infty$. Here, for $\Gamma
\subset{\mathbb{Z}}^{d}$, we denote the cardinality of $\Gamma$ by $|\Gamma|$.
For $n=(n_{1},...n_{d})$ the Euclidian norm will be denoted by $\Vert
n\Vert=(n_{1}^{2}+n_{2}^{2}+...+n_{d}^{2})^{1/2}$. Let $\{a_{n}\}_{n=1}%
^{\infty}$ and $\{b_{n}\}_{n=1}^{\infty}$ be real-valued sequences. To
indicate relative growth rates at infinity, we use $a_{n}\propto b_{n}$ to
indicate that $a_{n}/b_{n}\rightarrow C\in{\mathbb{R}}^{+}$, and the
particular case when $C=1$ is denoted $a_{n}\;\mathtt{\sim~}b_{n}$. By
$a_{n}=o(b_{n})$ we understand that $a_{n}/b_{n}\rightarrow0$ and
$a_{n}=O(b_{n})$ means that $\lim\sup|a_{n}/b_{n}|<C$ for some positive
numbrer $C.$ Throughout the paper, an indicator function will be denoted as
$\mathcal{I}$. A function $l:[0,\infty)\rightarrow{\mathbb{R}}$ is referred to
as \textit{slowly varying} (at $\infty$) if it is positive and measurable on
$[A,\infty)$ for some $A\in{\mathbb{R}}^{+}$ such that $\lim
\limits_{x\rightarrow\infty}l(\lambda x)/l(x)=1$ holds for each $\lambda
\in{\mathbb{R}}^{+}$. The integer part of a real number $x$ will be denoted by
$\lfloor x\rfloor.$


\section{Main Results}

\label{main}

In this work, we investigate the conditions under which the local limit
theorem holds for the partial sums of the linear random fields given by
(\ref{deflin}). Before we can treat the local limit theorem of this paper, we
mention the following CLT for linear random fields which is a variant of
Corollary 2 and Corollary 4 of Mallik and Woodroofe  (2011). For $d=1$
and $J_{n}=1$ with $\Gamma_{n}^{1}=\{1,2,\cdots,n\}$ the result is Theorem
18.6.5 in Ibragimov and Linnik  (1971).

\begin{theorem}
\label{thm1} (Mallik and Woodroofe, 2011) Let $S_{n}$ and $B_{n}$ be
defined as in (\ref{S_n}) and (\ref{B_n}). Assume that $B_{n}\rightarrow
\infty.$ When the field has long range dependence we additionally require that
the sets $\Gamma_{n}^{d}$ are constructed as a disjoint union of $J_{n}$
discrete rectangles, where $J_{n}=o(B_{n}^{2})$, while otherwise no such
restriction is required. Under these conditions, $S_{n}/B_{n}$ converges in
distribution to the standard normal distribution.
\end{theorem}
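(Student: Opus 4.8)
The plan is to reduce the CLT for $S_n/B_n$ to the classical Lindeberg–Feller CLT for triangular arrays of independent random variables, using the representation (\ref{rfsn}), namely $S_n=\sum_{i\in{\mathbb Z}^d}b_{n,i}\,\varepsilon_i$ with $b_{n,i}=\sum_{j\in\Gamma_n^d}a_{j-i}$ and $B_n^2=\sum_i b_{n,i}^2\to\infty$. Writing $\xi_{n,i}=b_{n,i}\varepsilon_i/B_n$, the array $\{\xi_{n,i}\}$ is, for each fixed $n$, a family of independent mean-zero variables with $\sum_i{\rm Var}(\xi_{n,i})=1$. The only thing to check is the Lindeberg condition
\[
L_n(\varepsilon):=\sum_{i\in{\mathbb Z}^d}\E\!\left[\xi_{n,i}^2\,\mathcal I\big(|\xi_{n,i}|>\varepsilon\big)\right]\longrightarrow 0
\qquad\text{for every }\varepsilon>0,
\]
after which Lindeberg–Feller delivers convergence of $S_n/B_n$ to the standard normal. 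Rewriting, $L_n(\varepsilon)=B_n^{-2}\sum_i b_{n,i}^2\,\E\big[\varepsilon_0^2\,\mathcal I(|\varepsilon_0|>\varepsilon B_n/|b_{n,i}|)\big]$, so the whole matter comes down to controlling $\sup_i |b_{n,i}|/B_n$: if $\max_i|b_{n,i}|/B_n\to0$, then the truncation level $\varepsilon B_n/|b_{n,i}|\to\infty$ uniformly in $i$, $\E[\varepsilon_0^2\,\mathcal I(|\varepsilon_0|>\varepsilon B_n/|b_{n,i}|)]\to0$ uniformly (using only $\E\varepsilon_0^2<\infty$), and since $\sum_i b_{n,i}^2/B_n^2=1$ we get $L_n(\varepsilon)\to0$.

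So the crux is the negligibility bound $\max_{i}|b_{n,i}|=o(B_n)$, and this is exactly where the two regimes and the hypothesis $J_n=o(B_n^2)$ enter. In the absolutely summable (short-memory) case, $|b_{n,i}|=|\sum_{j\in\Gamma_n^d}a_{j-i}|\le\sum_{k}|a_k|=:\|a\|_1<\infty$ is bounded uniformly in $n$ and $i$, while $B_n\to\infty$, so $\max_i|b_{n,i}|/B_n\le\|a\|_1/B_n\to0$ trivially. In the long-memory case we no longer have a uniform bound on $b_{n,i}$, but we do have the decomposition $\Gamma_n^d=\bigcup_{w=1}^{J_n}\Gamma_n^d(w)$ into disjoint discrete rectangles, so $b_{n,i}=\sum_{w=1}^{J_n} b_{n,i}^{(w)}$ with $b_{n,i}^{(w)}=\sum_{j\in\Gamma_n^d(w)}a_{j-i}$. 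For a single rectangle $R=\prod_\ell[\,\underline n_\ell,\overline n_\ell\,]$ one has, by Cauchy–Schwarz over the at most $|R|$ translates, $|b_{n,i}^{(w)}|\le \big(|\Gamma_n^d(w)|\sum_{k\in{\mathbb Z}^d}a_k^2\big)^{1/2}$; more to the point, for each fixed rectangle the map $i\mapsto b_{n,i}^{(w)}$ is a (multidimensional) moving average of the $\ell^2$ sequence $(a_k)$ against the indicator of a box, hence itself lies in $\ell^2$ with $\sum_i (b_{n,i}^{(w)})^2\le |R|\,\|a\|_2^2$, and by a telescoping/one-step-difference argument the sup-norm $\max_i|b_{n,i}^{(w)}|$ is $o(|R|^{1/2})$ — this is the one-rectangle local limit reasoning, essentially the argument of Shore (1978) underlying Theorem~\ref{thm1}'s source, which forces $b_{n,i}^{(w)}/\big(\sum_\iota (b_{n,\iota}^{(w)})^2\big)^{1/2}\to 0$. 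Summing over the $J_n$ rectangles and using the triangle inequality together with $J_n=o(B_n^2)$ then yields $\max_i|b_{n,i}|=o(B_n)$; the role of $J_n=o(B_n^2)$ is precisely to absorb the possibly non-orthogonal cross terms among the rectangles and keep the aggregate sup-norm small relative to $B_n$.

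The main obstacle is this last negligibility estimate in the long-memory regime: unlike the short-memory case, $b_{n,i}$ is not uniformly bounded and the rectangles can overlap in their "influence ranges," so one cannot simply add variances. The key technical point will be to show that within each rectangle no single innovation contributes a non-vanishing fraction of that rectangle's variance — i.e. a uniform asymptotic smallness of $\max_i |b_{n,i}^{(w)}|/(\sum_\iota (b_{n,\iota}^{(w)})^2)^{1/2}$ that is independent of which rectangle or which $n$ we look at — and then to combine the $J_n$ such estimates against $B_n^2$ using the growth restriction $J_n=o(B_n^2)$. Since this is stated as a variant of Corollaries 2 and 4 of Mallik and Woodroofe (2011), in the write-up I would either quote their result directly for $d$-dimensional linear fields over unions of rectangles or, if a self-contained argument is wanted, carry out the Lindeberg verification above with the rectangle-wise smallness lemma proved by a one-step difference (Abel summation) bound on $b_{n,i}^{(w)}$ in terms of the $\ell^2$-tails of $(a_k)$. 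The remaining ingredients — independence of the array, the normalization $\sum_i{\rm Var}(\xi_{n,i})=1$, and the passage from Lindeberg to convergence in distribution — are routine.
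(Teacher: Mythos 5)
Your overall frame --- write $S_n=\sum_i b_{n,i}\varepsilon_i$ as in (\ref{rfsn}), apply Lindeberg--Feller to the independent array $b_{n,i}\varepsilon_i/B_n$, and reduce everything to the negligibility condition $\sup_i|b_{n,i}|/B_n\to 0$ --- is exactly the route of the cited source; note that the paper does not reprove Theorem \ref{thm1} at all, but quotes Mallik and Woodroofe (2011) (their Corollaries 2 and 4), remarking only that relation (11) in their Proposition 2 yields $\sup_i|b_{n,i}|/B_n\to0$ when $J_n=o(B_n^2)$. Your treatment of the absolutely summable case ($|b_{n,i}|\le\sum_k|a_k|$ uniformly, while $B_n\to\infty$) is complete and agrees with that treatment.

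The long-memory half, however, has a genuine gap: the negligibility estimate is asserted rather than obtained, and the proposed mechanism cannot deliver it. First, from the per-rectangle bound $\max_i|b^{(w)}_{n,i}|=o(|\Gamma_n^d(w)|^{1/2})$ and the triangle inequality you only get $\max_i|b_{n,i}|=o\big(\sum_{w\le J_n}|\Gamma_n^d(w)|^{1/2}\big)$, a quantity with no general comparison to $B_n$ (sign cancellations can make $B_n^2$ much smaller than $|\Gamma_n^d|$), and the hypothesis $J_n=o(B_n^2)$ never enters any actual inequality --- you acknowledge that the cross terms between rectangles are the difficulty, but give no argument for them. Second, the ``rectangle-wise smallness'' $\max_i|b^{(w)}_{n,i}|/\big(\sum_\iota(b^{(w)}_{n,\iota})^2\big)^{1/2}\to0$ that you propose as the key lemma is not available under the stated hypotheses: the individual rectangles need not grow (they may even be single points, as the paper notes); only the global quantity $B_n$ tends to infinity. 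Third, the one-step difference/Abel-summation bound you invoke fails for $d\ge2$: a unit increment of $b^{(w)}_{n,i}$ in a single coordinate is a difference of two face sums and is in general only of the order of the square root of the cardinality of that face, not $O(1)$. What does work is the full mixed difference $\Delta_1\circ\cdots\circ\Delta_d$, which telescopes to $2^d$ corner values of $(a_j)$ per rectangle; this is precisely the paper's Lemma \ref{Lmbn}, and an estimate of that type (it is, in effect, relation (11) of Mallik--Woodroofe) shows that near a maximizing index many coefficients $|b_{n,u}|$ stay a fixed fraction of $\gamma_n=\sup_i|b_{n,i}|$, hence lower-bounds $B_n^2$ in terms of $\gamma_n$ and $J_n$ and converts the hypothesis on $J_n$ into $\gamma_n/B_n\to0$. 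Without an inequality of this kind your sketch does not prove the long-memory case; your fallback of simply quoting Mallik and Woodroofe's corollaries is, in effect, what the paper itself does.
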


\begin{remark}
In case $\sum\nolimits_{i\in{\mathbb{Z}}^{d}}|a_{i}|<\infty$, this theorem was
proved in Corollary 2 of Mallik and Woodroofe  (2011). When the field
has long range dependence the result of this theorem is a version of their
Corollary 4. Indeed, from relation (11) in the proof of Proposition 2 of the
same paper, the condition $\sup_{i\in\mathbb{Z}^{d}}|b_{n,i}|/B_{n}%
\rightarrow0$ is satisfied if $J_{n}=o(B_{n}^{2})$.
\end{remark}

\begin{remark}
If $J_{n}=1$, then $\Gamma_{n}^{d}$ consists of only one rectangle $\Gamma
_{n}^{d}(w)=\prod_{\ell=1}^{d}[\underline{n}_{\ell}(w),\overline{n}_{\ell
}(w)]\cap{\mathbb{Z}}^{d}.$ The condition $B_{n}\rightarrow\infty,$ implies
that $\max_{1\leq\ell\leq d}|\ \overline{n}_{\ell}(w)-\underline{n}_{\ell
}(w)|\rightarrow\infty$ as $n\rightarrow\infty.$ Note that if more than one
difference among $(\overline{n}_{\ell}(w)-\underline{n}_{\ell}(w))_{1\leq
\ell\leq d}$ tend to infinity, they can grow at independent rates.
\end{remark}

\begin{remark}
\label{r2} Given that $R_{0}$ is an open connected subset of $(-1/2,1/2]^{d}$
satisfying some regularity conditions and $\{\mu_{n}\}$ is a sequence of
positive numbers such that $\mu_{n}\rightarrow\infty$ as $n\rightarrow\infty$,
Lahiri and Robinson (2016) studied the central limit theorems for the
sums of linear random fields over dilated regions $\Gamma_{n}^{d}=R_{n}%
\cap{\mathbb{Z}}^{d}$, where $R_{n}=\mu_{n}R_{0}.$ In particular, when the
coefficients are of the form $a_{i}=l(||i||)/||i||^{\alpha}$ with
$d/2<\alpha<d$,
$l$ a slowly varying function at infinity, then, as shown in Lahiri and Robinson (2016), 
$B_{n}^{2}\propto\mu_{n}^{3d-2\alpha}l^{2}(\mu_{n})$. However, since the
volume of $R_{n}=O(\mu_{n}^{d})$, the sample size $|\Gamma_{n}^{d}|=O(\mu
_{n}^{d})$. We can separate $\Gamma_{n}^{d}$ into $J_{n}$ disjoint rectangles
with $J_{n}=O(\mu_{n}^{d})$. Since $B_{n}^{2}\propto\mu_{n}^{3d-2\alpha}%
l^{2}(\mu_{n})$ and $3d-2\alpha>d$, it is easy to see that $J_{n}=o(B_{n}%
^{2})$. Hence their central limit theorem (Theorem 3.2 there) in the long
memory case is a direct consequence of Theorem \ref{thm1} here.
\end{remark}

Denote the characteristic function of $\varepsilon_{0}$ by $\varphi
_{\varepsilon}(t):=\mathbb{E}(\exp\{it\varepsilon_{0}\}).$ It is well known
that $\varepsilon_{0}$ not having a lattice distribution is equivalent to
$|\varphi_{\varepsilon}(t)|<1$ for all $t\neq0.$ On the other hand, the Cramér
condition means that $\lim\sup_{|t|\rightarrow\infty}|\varphi_{\varepsilon
}(t)|<1$. Thanks to the Riemann-Lebesgue lemma, the Cramér condition is
automatically satisfied if the distribution function of $\varepsilon_{0}$ is
absolutely continuous with respect to the Lebesgue measure. It should be
mentioned that $\varepsilon_{0}$ has a non-lattice distribution whenever
$\varphi_{\varepsilon}(t)$ satisfies the Cramér condition. See Lemma
\ref{lemma1}.

$[$The \textquotedblleft Cramér condition" defined in the preceding paragraph
is different from, and has no particular connection with, another condition
(involving the existence of moment generating functions on certain domains)
that has absolutely no role in this paper but has elsewhere in the probability
theory literature sometimes been referred to as the \textquotedblleft Cramér
condition".$]$

\begin{theorem}
\label{thm2} Let $S_{n}$ and $B_{n}$ be defined as in (\ref{S_n}) and
(\ref{B_n}) and assume that $B_{n}\rightarrow\infty$. In the case
$\sum\nolimits_{i\in{\mathbb{Z}}^{d}}|a_{i}|<\infty$, we assume that
$\varepsilon_{0}$ is non-lattice. If the field has long range dependence, we
assume that the innovations satisfy the Cramér condition and that the sets
$\Gamma_{n}^{d}$ are constructed as a disjoint union of $J_{n}$ discrete
rectangles and we require that
\begin{equation}
\frac{J_{n}^{2/d}\log(B_{n})}{\sup_{i\in{\mathbb{Z}}^{d}}|b_{n,i}|^{2/d}}\rightarrow
0\text{ as }n\rightarrow\infty. \label{condTh2}%
\end{equation}
Under these conditions, (\ref{LCLTnonlat}) holds.
\end{theorem}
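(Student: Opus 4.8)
The plan is to reduce the statement to a Fourier-analytic estimate on the characteristic function of $S_n$, following the classical route for local limit theorems but adapted to the non-stationary array $(b_{n,i})$. Since $\hat h$ is real, continuous and compactly supported, by Fourier inversion one has
\[
\sqrt{2\pi}\,B_n\, Eh(S_n-u)=\frac{B_n}{\sqrt{2\pi}}\int \hat h(t)\, e^{-iut}\,\varphi_{S_n}(t)\,dt
=\frac{1}{\sqrt{2\pi}}\int \hat h(s/B_n)\, e^{-ius/B_n}\,\varphi_{S_n}(s/B_n)\,ds,
\]
after the substitution $t=s/B_n$, where $\varphi_{S_n}(t)=\prod_{i\in{\mathbb Z}^d}\varphi_\varepsilon(b_{n,i}t)$. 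The target quantity $[\exp(-u^2/2B_n^2)]\int h\,d\lambda$ is, by the same inversion, $\frac{1}{\sqrt{2\pi}}\int \hat h(0)\, e^{-ius/B_n}\, e^{-s^2/2}\,ds$ (using $\int h\,d\lambda=\hat h(0)$ and the Gaussian Fourier pair). So it suffices to show, uniformly in $u$,
\[
\int \Bigl|\hat h(s/B_n)\,\varphi_{S_n}(s/B_n)-\hat h(0)\,e^{-s^2/2}\Bigr|\,ds\longrightarrow 0 .
\]
Because $\hat h$ has compact support, say in $[-\delta,\delta]$, the integral runs only over $|s|\le \delta B_n$; and since $\hat h$ is continuous with $\hat h(s/B_n)\to\hat h(0)$, the contribution from replacing $\hat h(s/B_n)$ by $\hat h(0)$ is controlled once we bound $\int_{|s|\le\delta B_n}|\varphi_{S_n}(s/B_n)|\,ds$ uniformly.

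The integral is then split into three regions in the standard way. On $|s|\le \eta B_n$ for small fixed $\eta$, a second-order Taylor expansion of $\log\varphi_\varepsilon$ together with $B_n^2=\sum_i b_{n,i}^2$ and the negligibility condition $\sup_i|b_{n,i}|/B_n\to0$ (which, as the remark after Theorem \ref{thm1} notes, follows here since $J_n=o(B_n^2)$ is implied by (\ref{condTh2})) gives $\varphi_{S_n}(s/B_n)\to e^{-s^2/2}$ pointwise, with a dominating bound of the form $e^{-s^2/4}$; dominated convergence finishes this piece. On the far tail $\eta B_n\le |s|\le \delta B_n$, i.e. $|t|=|s/B_n|\in[\eta,\delta]$, a \emph{fixed} compact annulus of frequencies, one uses that $|\varphi_\varepsilon(t)|<1$ on $0<|t|$ together with the Cram\'er condition to get a bound $|\varphi_\varepsilon(b_{n,i}t)|\le \rho<1$ for the indices $i$ whose coefficient $b_{n,i}$ is comparable to $\sup_i|b_{n,i}|$; the product over merely those indices then decays, and here is where the precise quantitative rate (\ref{condTh2}) enters, since one needs enough such "large" indices to beat the factor $B_n$ out front, i.e. $\rho^{(\text{number of large indices})}\cdot B_n\to0$, and the number of such indices is governed by $J_n$ and $\sup_i|b_{n,i}|$ via the geometry of the $J_n$ rectangles — this is essentially Shore's (1978) argument for non-identically-distributed independent summands applied blockwise.

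The genuinely hard part is the middle/far-tail estimate on $|t|\in[\eta,\delta]$ in the long-memory regime: one must show that among the coefficients $\{b_{n,i}\}$ there are at least order $\sup_i|b_{n,i}|^{2/d}/J_n^{2/d}$-many (up to constants) indices $i$ with $|b_{n,i}|$ bounded below by a fixed multiple of $\sup_i|b_{n,i}|$, so that $\prod$ over them of $|\varphi_\varepsilon(b_{n,i}t)|$ is at most $\rho^{\,c\,\sup_i|b_{n,i}|^{2/d}/J_n^{2/d}}$, which times $B_n$ tends to $0$ exactly under (\ref{condTh2}) (note $B_n\le \sup_i|b_{n,i}|\cdot|\Gamma_n^d|^{1/2}$ is not sharp enough, so one instead uses $\log B_n\le C + \tfrac12\log|\Gamma_n^d|$ and absorbs it into the exponent, which is why the hypothesis is phrased with $\log(B_n)$). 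Establishing that lower bound on the count of "large" $b_{n,i}$ requires the additive structure $b_{n,i}=\sum_{w}\sum_{j\in\Gamma_n^d(w)}a_{j-i}$ and a continuity/overlap argument on each rectangle: shifting $i$ by one unit changes $b_{n,i}$ by a boundary term, so $b_{n,i}$ stays near its maximum on a sub-rectangle whose side lengths are a fixed fraction of those of the rectangle achieving the max, giving the $(\,\cdot\,)^{1/d}$ per-coordinate gain and hence the $2/d$ exponent. Once this combinatorial-geometric lemma is in hand, the three pieces assemble and the supremum over $u$ is immediate because $u$ only appears through the unimodular factor $e^{-ius/B_n}$, which does not affect any of the modulus bounds. \qed
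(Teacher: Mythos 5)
Your overall Fourier reduction coincides with the paper's, and you have correctly identified the crux in the long-memory case: one needs roughly $(\gamma_n/J_n)^{2/d}$ indices $i$ with $|b_{n,i}|$ bounded below by a fixed multiple of $\gamma_n:=\sup_i|b_{n,i}|$, so that $B_n\rho^{c(\gamma_n/J_n)^{2/d}}\to0$ under (\ref{condTh2}). However, there are two genuine gaps. First, your region decomposition is wrong in the long-memory regime: on $|s|\le\eta B_n$ the arguments $b_{n,i}s/B_n$ are only bounded by $\eta|b_{n,i}|$, and $\gamma_n\to\infty$ under (\ref{condTh2}), so the second-order expansion of $\log\varphi_\varepsilon$ is not available and the claimed dominating bound $e^{-s^2/4}$ is false in general (under Cram\'er one only gets a bound $\beta<1$, not Gaussian decay, for large arguments; and the indices with bounded $b_{n,i}$ need not carry a positive fraction of $B_n^2$, e.g.\ in the FARIMA case). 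The correct cut is at $|s|=B_n/\gamma_n$: below it all arguments are uniformly bounded and Lemma \ref{lemma3} (with $\omega_n=\gamma_n^{-1}$, $M=1$) gives the integrable dominating function; above it one must use the Cram\'er-based counting bound on the whole range $B_n/\gamma_n\le|s|\le LB_n$, not merely on the fixed annulus $[\eta B_n,\delta B_n]$ — your plan leaves the band $B_n/\gamma_n\le|s|\le\eta B_n$ uncovered. (Your counting argument would in fact extend there, since for the large indices $|b_{n,i}t|\ge c$ already when $|t|\ge\gamma_n^{-1}$, but as written the decomposition has a hole.)

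Second, your sketch of the counting lemma does not work as stated. Shifting $i$ by one unit changes $b_{n,i}$ by a difference of \emph{face sums} of the $a$'s over each rectangle, which under mere square-summability is only $O(J_nD(\text{face cardinality})^{1/2})$ and not small; so one cannot conclude that $b_{n,i}$ stays near its maximum on a sub-rectangle of proportional side lengths (and such a conclusion would give volume-many large indices, which is both unneeded and generally false). The paper's device (Lemma \ref{Lmbn}) is the $d$-fold \emph{mixed} increment: each unit mixed difference $\Delta b_u$ collapses, per rectangle $\Gamma_n^d(w)$, to $2^d$ single coefficients $a$, so summing over a $k^d$ box and applying Cauchy--Schwarz with $\sum_ia_i^2=D^2$ gives $\sum|\Delta b_u|\le 2^dDJ_nk^{d/2}$, while the same sum telescopes to the $2^d$ vertices of the cube of side $k$ at the maximizer $j_0$. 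Hence for every $k\le k_0\approx((1-\alpha)\gamma_n/2^dDJ_n)^{2/d}$ at least one vertex $u$ has $|b_u|\ge\alpha\gamma_n/(2^d-1)$, and these vertices are distinct across scales $k$, which yields exactly the count $(\gamma_n/J_n)^{2/d}$ and then Lemma \ref{gamma}. Finally, note that in the absolutely summable case only the non-lattice condition is assumed (no Cram\'er), so the annulus-counting route is not available there; that case should instead be handled, as in the paper, by the uniform bound $|b_{n,i}|\le\sum_i|a_i|$ and domination (Lemma \ref{lemma3} with $M=L\sum_i|a_i|$) over the entire range $|t|\le LB_n$.
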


\begin{remark}
Because $(X_{k})$ is stationary we always have
\begin{equation}
var(S_{n})=B_{n}^{2}\leq|\Gamma_{n}^{d}|^{2}\mathbb{E}(X_{0}^{2}).
\label{bound B}%
\end{equation}
So, if $J_{n}^{2/d}\log|\Gamma_{n}^{d}|/\sup_{i\in{\mathbb{Z}}^{d}}|b_{n,i}%
|^{2/d}\rightarrow0,$ then (\ref{condTh2}) is satisfied.
\end{remark}

\begin{remark}
One may ponder whether condition (\ref{condTh2}) always holds for the long
memory case. To settle such concerns, we offer the following counterexample.
Take a linear random field of the form (\ref{deflin}) with $d=1$--i.e., a
linear process. In particular, consider the one-sided linear process with
alternating harmonic coefficients. That is, put $a_{i}=(-1)^{i+1}/i$ for
$i\in{\mathbb{N}}$, and $a_{i}=0$ for $i\in{\mathbb{Z}}\backslash{\mathbb{N}}%
$. In this example, we take $J_{n}=1$ and the index set $\Gamma_{n}^{1}$ to be
the set $\{1,2,\cdots,n\}$. Note that $\sup_{i\in{\mathbb{Z}}}|b_{n,i}|$ does
not go to infinity as $n\rightarrow\infty$, and therefore, the aforementioned
condition is not satisfied. Even though the local limit theorem is not
guaranteed by our Theorem \ref{thm2} for this case, we note that the central
limit theorem holds, since $B_{n}\rightarrow\infty$ as $n\rightarrow\infty$.
\end{remark}

\begin{remark}
In Theorem \ref{thm2} we provide a local limit theorem for linear random
fields when the coefficients are absolutely summable with no restriction on
the sequence of regions other than $B_{n}\rightarrow\infty$. We also provide a
local limit theorem for the sum of a long memory linear random field over a
sequence of regions $\Gamma_{n}^{d}$ which are a disjoint union of discrete
rectangles and with no other specification on the individual coefficients
$a_{i},i\in\mathbb{Z}^{d}$ besides the global conditions (\ref{cond coef}%
)\ and (\ref{condTh2}). In practical application it allows us to have disjoint
discrete rectangles as spatial sampling regions, and the number of these
disjoint spatial rectangular sampling regions may increase as the sample size
increases. The discrete spatial rectangular sampling regions also include
$(\prod_{k=1}^{d}[\underline{n}_{k},\overline{n}_{k}])\cap{\mathbb{Z}}^{d}$
where $\underline{n}_{k}=\overline{n}_{k}$ for some $k$'s. We may have a
single point region if the equality holds for all $k$'s. We would also like to
mention that our local limit results are new also for $d=1$. Furthermore, we
have the freedom to take the sum over $J_{n}$ blocks of random variables as
long as $J_{n}=o(B_{n}^{2})$ for central limit theorem and $J_{n}=o(\sup
_{i\in{\mathbb{Z}^{d}}}|b_{n,i}|/(\log(B_{n}))^{d/2})$ for local limit theorem.
\end{remark}

\begin{remark}
El Machkouri et al. (2013) worked with nonlinear random fields, and in
their work on central limit theorem, they required the condition that
$|\partial\Gamma_{n}^{d}|/|\Gamma_{n}^{d}|\rightarrow0$ as $n\rightarrow
\infty$, where $\partial\Gamma_{n}^{d}$ is the boundary of the region
$\Gamma_{n}^{d}$. We would just like to mention that our results demonstrate
that this condition is not necessary in the linear random field setting. For
example, in the case $d=2$ with $\Gamma_{n}^{2}=([1,n]\times\lbrack
1,3])\cap{\mathbb{Z}}^{2}$, we have $|\partial\Gamma_{n}^{2}|/|\Gamma_{n}%
^{2}|=(2n+2)/3n$.
\end{remark}

\section{Examples}

There are many situations of interest when (\ref{condTh2}) holds. In
particular it is satisfied by the fractionally integrated processes which play
an important role for analyzing various models in econometrics. They are a
particular case of linear processes with regularly varying coefficients for
which we provide a few examples. Of course, examples of this type, where the
coefficients are absolutely summable, will certainly satisfy the local theorem
as given in the first part of Theorem \ref{thm2}. In what follows, we shall
discuss only the long memory case.

\bigskip

\textbf{Example 1}. Suppose we work on one rectangle $\Gamma_{n}^{d}%
=\prod_{\ell=1}^{d}[1,n_{\ell}]\cap{\mathbb{Z}}^{d}$, where $n_{\ell}=n_{\ell
}(n)$ is a sequence of natural numbers for each $\ell$. Let $X_{n}$ and
$B_{n}$ be defined as in (\ref{deflin}) and (\ref{B_n}). For $j=(j_{1}%
,j_{2},\cdots,j_{d})$, let $(a_{j})_{j\in\mathbb{Z}^{d}}$ with $\sum
_{j\in{\mathbb{Z}}^{d}}a_{j}^{2}<\infty$ and assume that for some constant
$C$,%
\begin{equation}
a_{j}\geq C\prod_{\ell=1}^{d}(1/|j_{\ell}|)^{\beta_{\ell}}\text{ with }%
\beta_{\ell}>1/2,\text{ }1\leq\ell\leq d. \label{coeff1}%
\end{equation}
Here we take $1/|j_{\ell}|=1$ if $j_{\ell}=0$. Assume that at least one
$\beta_{\ell}$ is strictly smaller than $1$. Then $(a_{j})_{j\in\mathbb{Z}%
^{d}}$ is not absolutely summable and the linear random field has long memory.
Let us assume now that $\beta_{k}<1,$ for all positive integers $k,$ $1\leq
k\leq m$, for some $m$ with $1\leq m\leq d$ and $\beta_{\ell}>1,$ $m+1\leq
\ell\leq d$. Assume that $n_{k}\rightarrow\infty,$ for all $k$, $1\leq k\leq
m$ and, for some $M<\infty,$ $n_{k}\leq M,$ $m+1\leq k\leq d$. Then, starting
from (\ref{coeff1}), by simple analytical manipulations, we have that
\begin{align*}
b_{n,0}  &  =\sum_{j\in\Gamma_{n}^{d}}a_{j}\geq C\sum_{j\in\Gamma_{n}^{d}%
}\prod_{\ell=1}^{d} (1/|j_{\ell}|)^{\beta_{\ell}}=C\prod_{\ell=1}^{d}%
\sum_{1\leq j_{\ell}\leq n_{\ell}}(1/j_{\ell})^{\beta_{\ell}}\\
&  \propto\prod_{\ell=1}^{m}n_{\ell}^{1-\beta_{\ell}}\prod_{\ell=m+1}^{d}%
\sum_{1\leq j_{\ell}\leq n_{\ell}}(1/j_{\ell})^{\beta_{\ell}}\geq C_{1}%
\prod_{\ell=1}^{m}n_{\ell}^{1-\beta_{\ell}}.
\end{align*}
On the other hand, using (\ref{bound B}), $B_{n}^{2}\leq|\Gamma_{n}^{d}%
|^{2}\mathbb{E}(X_{0}^{2})=\prod_{\ell=1}^{d}n_{\ell}^{2}\mathbb{E}(X_{0}%
^{2})$. Based on these computations, we obtain that
\begin{align*}
\frac{\log B_{n}}{\sup_{\ell\in{\mathbb{Z}}^{d}}|b_{n,\ell}|^{2/d}}  &  \leq
C_{2}\frac{\sum_{\ell=1}^{d}\log n_{\ell}}{b_{n,0}^{2/d}}\\
&  \leq C_{3}\frac{\sum_{\ell=1}^{m}\log n_{\ell}}{\prod_{\ell=1}^{m}n_{\ell
}^{2(1-\beta_{\ell})/d}}\rightarrow 0\text{ \ as }\min_{1\leq j\leq m}%
(n_{j})\rightarrow\infty.
\end{align*}
This latter limit, shows that condition (\ref{condTh2}) is satisfied. Hence,
the local limit theorem in Theorem \ref{thm2} holds, provided Cramèr condition
is satisfied.

\bigskip

In the context of this example, note that we can also consider sets of the
form $\Gamma_{n}^{d}=\bigcup\nolimits_{w=1}^{J_{n}}\Gamma_{n}^{d}(w),$ where
$\{\Gamma_{n}^{d}(w):1\leq w\leq J_{n}\}$ are disjoint rectangles $\Gamma
_{n}^{d}(w)=\prod_{\ell=1}^{d}[c_{w},c_{w}+n_{\ell}]\cap{\mathbb{Z}}^{d}$ of
equal size. For simplicity let us take $m=d$. For this case we have $B_{n}%
^{2}\leq|\Gamma_{n}^{d}|^{2}\mathbb{E}(X_{0}^{2})=J_{n}^{2}\prod_{\ell=1}%
^{d}n_{\ell}^{2}\mathbb{E}(X_{0}^{2})$ and $\sup_{\ell\in{\mathbb{Z}}^{d}%
}|b_{n,\ell}|\geq\prod_{\ell=1}^{d}n_{\ell}^{1-\beta_{\ell}}$. Hence,%
\[
\frac{J_{n}^{2/d}\log B_{n}}{\sup_{\ell\in{\mathbb{Z}}^{d}}|b_{n,\ell}|^{2/d}}\leq
C_{4}\frac{J_{n}^{2/d}(\log J_{n}+\sum_{\ell=1}^{d}\log n_{\ell})}{\prod_{\ell
=1}^{d}n_{\ell}^{2(1-\beta_{\ell})/d}},
\]
which converges to $0$ when $\min_{1\leq j\leq d}(n_{j})\rightarrow\infty,$ as
soon as $J_{n}^{2/d}=o(\prod_{\ell=1}^{d}n_{\ell}^{2(1-\beta_{\ell})/d}/\sum_{\ell=1}%
^{d}\log n_{\ell})$. If we impose this condition on $J_{n}$ then we can also
obtain the conclusion of Theorem \ref{thm2} for this situation.

\bigskip

\textbf{Example 2}. This example is a variant of Example 1, with the same
index sets $\Gamma_{n}^{d}$. Take now
\begin{equation}
a_{j}=\prod_{\ell=1}^{d}(1/|j_{\ell}|)^{\alpha_{\ell}}h_{\ell}(|j_{\ell}|),
\label{coeff2}%
\end{equation}
with $\alpha_{\ell}>1/2$ and $h_{\ell}(\cdot)$ are positive slowly varying
functions, $1\leq\ell\leq d$. Again, we let $1/|j_{\ell}|=1$ if $j_{\ell}=0$.
Then $\sum_{\ell\in{\mathbb{Z}}^{d}}a_{\ell}^{2}<\infty$. If $\alpha_{k}<1$
for some $1\leq k\leq d,$ then $\sum_{\ell\in{\mathbb{Z}}^{d}}|a_{\ell
}|=\infty$ and we are in the long memory case. For some $m$ with $1\leq m\leq
d,$ assume now that $1/2<\alpha_{k}<1,$ for all $k,$ $1\leq k\leq m$, and
$\alpha_{\ell}>1,$ $m+1\leq\ell\leq d$. Recall now that, for a positive slowly
varying function $h(x),$ we have that for every $\varepsilon>0,$
$\lim_{x\rightarrow\infty}x^{\varepsilon}h(x)=\infty$ and $\lim_{x\rightarrow
\infty}x^{-\varepsilon}h(x)=0~$(see Seneta, 1976). Then we can find constants
$1/2<\beta_{k}<1,$ for all $k,$ $1\leq k\leq m$, and $\beta_{\ell}>1,$
$m\leq\ell\leq d$ such that (\ref{coeff1}) holds. If we assume that
$n_{k}\rightarrow\infty,$ for all $k$, $1\leq k\leq m$ and, for some
$M<\infty,$ $n_{k}<M,$ $m+1\leq k\leq d$, then the conditions in Example 1
hold. Therefore, for this case, the conclusion of Theorem \ref{thm2} holds
with
\[
B_{n}^{2}=\prod_{\ell=1}^{m}c(\alpha_{\ell})n_{\ell}^{3-2\alpha_{\ell}}%
h_{\ell}^{2}(n_{\ell}),
\]
with constants $c(\alpha_{\ell})$ specified in Wang, Lin and Gulati (2001).\bigskip

\textbf{Example 3. } We work this time on one rectangle $\Gamma_{n}^{d}%
=\prod_{\ell=1}^{d}[1,k_{i}n]\cap{\mathbb{Z}}^{d}$, where $k_{i}\in
{\mathbb{R}}^{+},1\leq i\leq d$. For $j=(j_{1},j_{2},\cdots,j_{d})$, let
$(a_{j})_{j\in\mathbb{Z}^{d}}$ with $\sum_{\ell\in{\mathbb{Z}}^{d}}a_{\ell
}^{2}<\infty$ and assume that for some constant $C>0$,
\begin{equation}
a_{j}\geq C||j||^{-\beta}\text{ with }\beta\in(d/2,d)\text{ and }j\neq
0_{d}=(0,0,...,0).\label{lowerB2}%
\end{equation}
It is easy the see that $\sum_{\ell\in{\mathbb{Z}}^{d}}a_{\ell}=\infty$ and we
also have $a_{j}\geq C(j_{1}+j_{2}+\cdots+j_{d})^{-\beta}.$ Straightforward
computations show that we can find a positive constant $C_{1\text{ }}$ and
$n_{0}\in\mathbb{N}$ such that for all $n\ >n_{0}$ we have%
\[
b_{n,0}=\sum_{j\in\Gamma_{n}^{d}}a_{j}\geq C_{1}n^{d-\beta}\text{ }.
\]
Therefore, using (\ref{bound B}), we can find a positive constant $C_{2}$ such
that
\begin{align*}
\frac{\log(B_{n})}{\sup\limits_{i\in{\mathbb{Z}}^{d}}b_{n,i}^{2/d}} &  \leq
\frac{C_{2}\log(n^{d})}{b_{n,0}^{2/d}}\\
&  \leq\frac{C_{2}d\log{n}}{C_{1}n^{2(d-\beta)/d}}\rightarrow0\text{ as
}n\rightarrow\infty.
\end{align*}
This shows that condition (\ref{condTh2}) of Theorem \ref{thm2} is satisfied
and the local limit theorem holds if Cramèr condition is satisfied.

\bigskip

Furthermore, we can also mention that for this case the local limit theorem
also holds if we actually consider an union of $J_{n}$ rectangles of equal
size $(k_{1} n,...,k_{d} n)$ such that $J_{n}^{2/d}=o(n^{2(d-\beta)/d}/\log n)$.

\bigskip

As a particular example of this kind we shall give an example treated by
Surgailis (1982) and also by Beknazaryan et al. (2019).

\bigskip

\textbf{Example 4}. Assume that $\Gamma_{n}^{d}$ are cubic, i.e., $\Gamma
_{n}^{d}=[-n,n]^{d}\cap{\mathbb{Z}}^{d}$, and put $a_{i}=l(\Vert
i\Vert)G(i/\Vert i\Vert)\Vert i\Vert^{-\alpha}$ with $\alpha\in(d/2,d),$
where
$l(x)$ is slowly varying at $\infty$ and $G:\mathbb{S}_{d-1}\rightarrow
{\mathbb{R}}^{+}$ is continuous on its domain \textup{\big(}the unit sphere in
$d$-dimensional space\textup{\big)}. For this example we know that
$B_{n}\propto n^{\frac{3d}{2}-\alpha}\;l(n)$ (see Surgailis, 1982,
Theorem 2) and from Beknazaryan et al. (2019) we can easily deduce
that $\sup\nolimits_{i\in{\mathbb{Z}}^{d}}\big|b_{n,i}\big|\propto
(n^{d-\alpha}\;l(n))$. We could also see directly that condition
(\ref{condTh2}) of Theorem \ref{thm2} is satisfied\ by using the proof of
Example 3. Indeed, by the properties of slowly varying functions, we can find
$\beta\in(d/2,d)$ such that $a_{j}\geq C||j||^{-\beta}.$ Since we are in the
long memory case, if the innovations satisfy the Cramér condition, then
(\ref{LCLTnonlat}) holds. \rule{0.5em}{0.5em}

\bigskip

\section{Simulation Study}

In this section, we perform a simulation study for the local limit theorem in
Example $4,$ applied to the one-dimensional case. The linear processes we used
here are the fractionally integrated processes FARIMA$(0,1-\alpha,0)$ which
play an important role in financial time series modeling, and they are widely
studied. Such processes are defined for $1/2<\alpha<1$ by
\[
X_{j}=(1-B)^{\alpha-1}\varepsilon_{j}=\sum_{i\geq0}a_{i}\varepsilon
_{j-i}\text{ with }a_{i}=\frac{\Gamma(i+1-\alpha)}{\Gamma(1-\alpha
)\Gamma(i+1)}\,\text{\ ,}\label{deffractlin}%
\]
where $B$ is the backward shift operator, $B\varepsilon_{j}=\varepsilon_{j-1}%
$. By the well-known fact that $\lim_{n\rightarrow\infty}\Gamma(n+x)/n^{x}%
\Gamma(n)=1$ for any real $x,$ we have $\lim_{n\rightarrow\infty}%
a_{n}/n^{-\alpha}=1/\Gamma(1-\alpha)$. The variance of the partial sum
$S_{n}=\sum_{j=1}^{n}X_{j}$ is
\begin{equation}
B_{n}^{2}\sim c_{\alpha}n^{3-2\alpha}{{\mathbb{E}}\,}\varepsilon
^{2}/[(1-\alpha)(3-2\alpha)\Gamma^{2}(1-\alpha)] \label{defBn}%
\end{equation}
where
\[
c_{\alpha}=\int_{0}^{\infty}x^{-\alpha}(1+x)^{-\alpha}dx.
\]
The variance formula for the partial sum of FARIMA$(0,1-\alpha,0)$ is well
known. See, for example, Wang, Lin and Gulati (2001).

Using the FARIMA$(0,1-\alpha,0)$ model, linear processes with innovations
following the Student's $t$ distribution with 5 degrees of freedom were
generated. Employing the MATLAB code of Fay et al. (2009), $N$ replicates
of linear processes were generated, each of length $n$. Specifically, we
generated cases with $N=5,000$ and $N=10,000$ cross-referenced with $n=2^{10}%
$, $n=2^{12}$, and $n=2^{14}$, and this was done for each of the values
$\alpha=0.95$, $\alpha=0.70$, and $\alpha=0.55$. Once the data were obtained,
the local limit measures of various intervals were estimated by using relative
frequency to estimate $P(S_{n}\in(a,b))$ and using the approximation of
$B_{n}$ given in (\ref{defBn}).

The simulation study supports the validity of Example 4 for the
one-dimensional case. See Tables \ref{table1} and \ref{table2} below. Of particular interest is the general tendency of
results to be better for larger $N$, which is likely explained by the fact
that we estimate $P(S_{n}\in(a,b))$ using relative frequency. Also, we notice
that the results generally get better with larger values of the sample size
$n$.



\begin{table}[H]
\caption{Local limit measures of the intervals $(-100,0),(-50,50)$, and
$(0,100)$ - one per row - using $N$ one-dimensional linear processes, each of
length $n$, employing various long memory cases using the $FARIMA(0,\;1-\alpha
\;,0)$ model with $t_{5}$ innovations.}%
\label{table1}
\center
\bigskip
{\scriptsize \
\begin{tabular}
[c]{|c|ccc|ccc|ccc|}\hline
&  &  &  &  &  &  &  &  & \\
& \multicolumn{3}{|c|}{$n=2^{10}$} & \multicolumn{3}{|c|}{$n=2^{12}$} &
\multicolumn{3}{|c|}{$n=2^{14}$}\\
&  &  &  &  &  &  &  &  & \\
$N$ & $\alpha=0.95$ & $\alpha=0.70$ & $\alpha=0.55$ & $\alpha=0.95$ &
$\alpha=0.70$ & $\alpha=0.55$ & $\alpha=0.95$ & $\alpha=0.70$ & $\alpha
=0.55$\\\hline
& $66$ & $105$ & $117$ & $92$ & $99$ & $98$ & $95$ & $91$ & $122$\\
$5\times10^{3}$ & $90$ & $99$ & $115$ & $101$ & $95$ & $108$ & $100$ & $88$ &
$110$\\
& $67$ & $99$ & $97$ & $90$ & $96$ & $108$ & $98$ & $106$ & $110$\\\hline
& $67$ & $97$ & $105$ & $91$ & $98$ & $101$ & $96$ & $97$ & $104$\\
$1\times10^{4}$ & $89$ & $98$ & $95$ & $99$ & $103$ & $105$ & $101$ & $97$ &
$98$\\
& $65$ & $103$ & $101$ & $87$ & $104$ & $108$ & $98$ & $98$ & $92$\\\hline
\end{tabular}
}\end{table}
\begin{table}[H]
\caption{Local limit measures of the intervals $(-50,0),(-25,25)$, and
$(0,50)$ - one per row - using $N$ one-dimensional linear processes, each of
length $n$, employing various long memory cases using the $FARIMA(0,\;1-\alpha
\;,0)$ model with $t_{5}$ innovations.}
\center
\bigskip
{\scriptsize \
\label{table2}
\begin{tabular}
[c]{|c|ccc|ccc|ccc|}\hline
&  &  &  &  &  &  &  &  & \\
& \multicolumn{3}{|c|}{$n=2^{10}$} & \multicolumn{3}{|c|}{$n=2^{12}$} &
\multicolumn{3}{|c|}{$n=2^{14}$}\\
&  &  &  &  &  &  &  &  & \\
$N$ & $\alpha=0.95$ & $\alpha=0.70$ & $\alpha=0.55$ & $\alpha=0.95$ &
$\alpha=0.70$ & $\alpha=0.55$ & $\alpha=0.95$ & $\alpha=0.70$ & $\alpha
=0.55$\\\hline
& $46$ & $51$ & $67$ & $51$ & $52$ & $62$ & $49$ & $45$ & $61$\\
$5\times10^{3}$ & $50$ & $47$ & $54$ & $49$ & $49$ & $43$ & $48$ & $40$ & $61
$\\
& $46$ & $48$ & $48$ & $49$ & $43$ & $46$ & $51$ & $43$ & $49$\\\hline
& $46$ & $48$ & $50$ & $49$ & $52$ & $43$ & $50$ & $51$ & $55$\\
$1\times10^{4}$ & $48$ & $50$ & $51$ & $50$ & $52$ & $44$ & $50$ & $45$ & $49
$\\
& $43$ & $51$ & $54$ & $50$ & $51$ & $62$ & $51$ & $47$ & $43$\\\hline
\end{tabular}
}\end{table}


\section{Proofs}

For the proof of Theorem \ref{thm2}, we need several lemmas.

\begin{lemma}
\label{lemma1} Let $\varphi(t)$ be the characteristic function of some random
variable, and let $b\text{ and }c<1$ be positive real numbers. If
$|\varphi(t)|\leq c\text{ for }b\leq|t|\leq2b$, then
\[
|\varphi(t)|\leq1-\frac{1-c^{2}}{8b^{2}}t^{2}\text{ \ for all }|t|<b.
\]

\end{lemma}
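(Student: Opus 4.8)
The plan is to reduce the whole statement to a bound on $\psi(t):=|\varphi(t)|^{2}$. Note that $\psi$ is itself a characteristic function, namely that of the symmetrization $X-X'$ (with $X'$ an independent copy of the underlying variable $X$), so $\psi$ is real, even, and takes values in $[0,1]$; the hypothesis becomes $\psi(t)\le c^{2}$ for $b\le|t|\le2b$. I claim it suffices to prove $\psi(t)\le 1-\frac{1-c^{2}}{4b^{2}}t^{2}$ for $|t|<b$: since $(1-c^{2})t^{2}/(4b^{2})<1$ there, taking square roots and using $\sqrt{1-x}\le 1-x/2$ for $x\in[0,1]$ gives $|\varphi(t)|=\sqrt{\psi(t)}\le 1-\frac{1-c^{2}}{8b^{2}}t^{2}$, which is the Lemma.

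First I would establish the ``doubling inequality'' $1-\psi(2t)\le 4\bigl(1-\psi(t)\bigr)$ for every $t\in\R$. Writing $\psi(t)=\E\cos(tX)$ (the imaginary part vanishes by symmetry), this is obtained by integrating the elementary trigonometric estimate $1-\cos2\theta=2\sin^{2}\theta=8\sin^{2}(\theta/2)\cos^{2}(\theta/2)\le 8\sin^{2}(\theta/2)=4(1-\cos\theta)$ against the law of $X$. Iterating this inequality yields $1-\psi(t)\ge\tfrac14\bigl(1-\psi(2t)\bigr)\ge\cdots\ge 4^{-k}\bigl(1-\psi(2^{k}t)\bigr)$ for every integer $k\ge1$.

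Next, fix $t$ with $0<|t|<b$ and choose $k\ge1$ minimal with $2^{k}|t|\ge b$; minimality forces $2^{k-1}|t|<b$, hence $b\le 2^{k}|t|<2b$, so by the hypothesis and the evenness of $\psi$ we get $\psi(2^{k}t)\le c^{2}$, while $2^{k}<2b/|t|$ gives $4^{k}<4b^{2}/t^{2}$. Substituting into the iterated bound, $1-\psi(t)\ge 4^{-k}(1-c^{2})>\frac{t^{2}}{4b^{2}}(1-c^{2})$, which is exactly the desired estimate on $\psi$; the case $t=0$ is trivial since $\psi(0)=1$. Combining with the square-root step of the first paragraph completes the argument.

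The proof is short, and its only real idea is the passage to $|\varphi|^{2}$ together with the dyadic rescaling. The one place where I would be careful is the bookkeeping around the choice of $k$: one must check $k\ge1$ so that the iteration is non-vacuous and the hypothesis on the annulus $b\le|t|\le2b$ genuinely gets invoked, and one must track that the resulting power of $4$ converts cleanly into the factor $t^{2}/b^{2}$. Note that nothing here uses the Cram\'er or non-lattice hypotheses of the paper; the lemma is a self-contained fact about an arbitrary characteristic function bounded on an annulus.
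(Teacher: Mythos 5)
Your argument is correct and is essentially the proof the paper invokes: the paper simply cites Theorem 1 on page 10 of Petrov (1975), whose standard proof is exactly your passage to $|\varphi(t)|^{2}$ (the symmetrized characteristic function), the doubling inequality $1-|\varphi(2t)|^{2}\le 4\bigl(1-|\varphi(t)|^{2}\bigr)$, dyadic iteration onto the annulus $b\le|s|\le 2b$, and the final square-root step. No substantive differences; your bookkeeping on the choice of $k$ and the constants is sound.
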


\begin{proof}
This is a version of Theorem 1 on page 10 in Petrov (1975), which is
obtained by using the same proof.
\end{proof}

\begin{lemma}
\label{lemma2} If $\varphi(t)$ is the characteristic function of some random
variable satisfying the Cramér condition, then for any $\delta>0$ there is
$\beta=\beta(\delta)\in(0,1)$ such that
\[
|\varphi(t)|\leq\beta\text{ for all }|t|\geq\delta.
\]

\end{lemma}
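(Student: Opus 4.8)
The plan is to split $\{t:|t|\ge\delta\}$ into a bounded annulus and an unbounded tail, bound $|\varphi|$ on each piece by a constant strictly below $1$, and then take the larger of the two constants as $\beta$.

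First I would read the tail bound straight off the hypothesis. Put $c_{0}:=\limsup_{|t|\to\infty}|\varphi(t)|$; the Cram\'er condition says precisely that $c_{0}<1$. Fix any $c$ with $\max\{c_{0},0\}<c<1$. By definition of $\limsup$ there is a $T>0$, which we may enlarge so that $T>\delta$, with $|\varphi(t)|\le c$ for all $|t|\ge T$.

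Next I would treat the compact set $A:=\{t\in\mathbb{R}:\delta\le|t|\le T\}$. Since $\varphi$ is continuous, $|\varphi|$ attains its maximum $M$ on $A$; the crux is that $M<1$. This is where the hypothesis is genuinely used: a distribution whose characteristic function satisfies the Cram\'er condition is non-lattice (as noted in the discussion preceding Theorem \ref{thm2}), and being non-lattice is equivalent to $|\varphi(t)|<1$ for every $t\neq0$. Since $0\notin A$, this gives $|\varphi(t)|<1$ for each $t\in A$, and compactness upgrades this to $M=\max_{t\in A}|\varphi(t)|<1$. (If one wishes to avoid quoting the non-lattice equivalence: were $M=1$, continuity and compactness would produce $t^{\ast}\in A$ with $|\varphi(t^{\ast})|=1$, hence $e^{it^{\ast}X}$ is a.s.\ constant, whence $|\varphi(kt^{\ast})|=1$ for every $k\in\mathbb{Z}$; as $t^{\ast}\neq0$ this contradicts $|\varphi(t)|\le c<1$ for $|t|\ge T$.)

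Finally I would set $\beta:=\max\{c,M\}$, so that $\beta\in(0,1)$. For any $t$ with $|t|\ge\delta$, either $|t|\ge T$, in which case $|\varphi(t)|\le c\le\beta$, or $\delta\le|t|\le T$, in which case $t\in A$ and $|\varphi(t)|\le M\le\beta$; either way $|\varphi(t)|\le\beta$, which is the claim. The only step that is not pure bookkeeping is the strict inequality $M<1$ on the annulus, i.e.\ that $|\varphi|$ cannot reach $1$ at a nonzero argument; that is exactly the non-lattice property forced by the Cram\'er condition, and the remainder is routine continuity-and-compactness reasoning.
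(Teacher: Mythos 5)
Your argument is correct and follows essentially the same route as the paper's proof: a tail constant obtained from the $\limsup$, the strict bound $|\varphi(t)|<1$ for $t\neq0$ to get a maximum strictly below $1$ on the compact annulus $\delta\le|t|\le T$ by continuity, and $\beta$ taken as the larger of the two constants. The only cosmetic difference is how the strict inequality at nonzero arguments is justified: the paper deduces it from Lemma \ref{lemma1}, while you quote the non-lattice equivalence (which the paper itself traces back to Lemma \ref{lemma1}) and also supply a self-contained alternative via the a.s.\ constancy of $e^{it^{\ast}X}$ contradicting the tail bound.
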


\begin{proof}
Since $\lim\sup_{|t|\rightarrow\infty}|\varphi(t)|<1$, there exists
$0<\gamma<1$ and $T>0$ such that for all $|t|>T$ we have that $|\varphi
(t)|\leq\gamma.$ For any $\delta>0$ such that $\delta> T$, the result holds
with $\beta=\gamma$. By Lemma \ref{lemma1}, on the other hand, $\lim
\sup_{|t|\rightarrow\infty}|\varphi(t)|<1$ implies that $|\varphi(t)|<1$ for
all $t\neq0$. If $\delta<T$, we appeal to the continuity of $\varphi(t)$ to
guarantee that $\eta=\max_{\delta\le|t|\le T}|\varphi(t)|\in(0,1)$, whence
$|\varphi(t)|\leq\eta$ for any $t$ with $|t|\in\lbrack\delta,T]$. Therefore,
the result holds with $\beta=\gamma\vee\eta.$
\end{proof}

\begin{lemma}
\label{lemma3} If $S_{n}$ and $B_{n}$ are as defined in (\ref{S_n}) and
(\ref{B_n}) respectively, if the innovations have a non-lattice distribution,
and if $\{\omega_{n}\}$ is a sequence of positive real numbers for which there
exists some $M>0$ so that $|\omega_{n}b_{n,i}|\leq M$ for all $n\in
{\mathbb{N}}$ and for all $i\in{\mathbb{Z}}^{d}$, then the function
\[
\varphi_{\frac{S_{n}}{B_{n}}}(t)\ \mathcal{I}(|t|<\omega_{n}B_{n})
\]
is dominated by some integrable function $g(t)$.
\end{lemma}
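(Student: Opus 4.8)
The plan is to write the characteristic function of $S_n/B_n$ as a product over the innovations and then dominate it by a Gaussian-type bound coming from Lemma \ref{lemma1}. Since $S_n = \sum_{i\in\mathbb{Z}^d} b_{n,i}\,\varepsilon_i$ with the $\varepsilon_i$ i.i.d.\ having characteristic function $\varphi_\varepsilon$, we have
\[
\varphi_{S_n/B_n}(t) = \prod_{i\in\mathbb{Z}^d}\varphi_\varepsilon\!\left(\frac{b_{n,i}}{B_n}\,t\right),
\]
and on the event $\{|t|<\omega_n B_n\}$ each argument $t\,b_{n,i}/B_n$ satisfies $|t\,b_{n,i}/B_n| \le \omega_n |b_{n,i}| \le M$. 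So on the relevant range all the arguments of $\varphi_\varepsilon$ stay inside the fixed compact set $[-M,M]$. The first step is therefore to fix a scale: choose $b>0$ with $2b = M$ (or any $b$ with $[-2b,2b]\supseteq[-M,M]$), and note that since $\varepsilon_0$ is non-lattice, $c := \max_{b\le |s|\le 2b}|\varphi_\varepsilon(s)| < 1$; here I use the standard fact (a consequence of Lemma \ref{lemma1} together with continuity of $\varphi_\varepsilon$, exactly as in the proof of Lemma \ref{lemma2}) that non-lattice is equivalent to $|\varphi_\varepsilon(s)|<1$ for all $s\neq 0$, and a maximum over a compact set bounded away from $0$ is then strictly less than $1$.

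The second step is to apply Lemma \ref{lemma1} to $\varphi_\varepsilon$ with this $b$ and $c$: it yields $|\varphi_\varepsilon(s)| \le 1 - \kappa s^2$ for all $|s|<b$, where $\kappa = (1-c^2)/(8b^2)>0$. Plugging in $s = t\,b_{n,i}/B_n$ and using $1-x \le e^{-x}$,
\[
\Big|\varphi_\varepsilon\!\left(\tfrac{b_{n,i}}{B_n}t\right)\Big| \le \exp\!\left(-\kappa\,\frac{b_{n,i}^2}{B_n^2}\,t^2\right)
\]
for every $i$ with $|t\,b_{n,i}/B_n|<b$. Taking the product over $i\in\mathbb{Z}^d$ — for which I need to be a little careful about the indices where $|t\,b_{n,i}/B_n|$ is not below $b$, but those contribute factors bounded by $1$ in absolute value, so they only help — gives, on $\{|t|<\omega_n B_n\}$,
\[
\big|\varphi_{S_n/B_n}(t)\big| \le \exp\!\left(-\kappa\, t^2 \sum_{i\in\mathbb{Z}^d}\frac{b_{n,i}^2}{B_n^2}\right).
\]
But $\sum_{i} b_{n,i}^2 = B_n^2$ (this is the identity $B_n^2 = \sum_i b_{n,i}^2$ from the introduction, using $\sigma_\varepsilon^2=1$), so the exponent is exactly $-\kappa t^2$, independent of $n$. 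Hence
\[
\big|\varphi_{S_n/B_n}(t)\big|\,\mathcal{I}(|t|<\omega_n B_n) \le e^{-\kappa t^2} =: g(t),
\]
and $g$ is integrable on $\mathbb{R}$. That completes the argument.

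The main obstacle is the bookkeeping in the product step: one must partition $\mathbb{Z}^d$ (for each fixed $t$ in the allowed range) into the indices $i$ with $|t\,b_{n,i}/B_n| < b$, where the Lemma \ref{lemma1} bound applies, and the remaining indices, where one only knows $|\varphi_\varepsilon(\cdot)| \le 1$ but — crucially — also $|\varphi_\varepsilon(\cdot)| \le c < 1$ if the argument lands in $[b,2b]$ in absolute value (which it does, since it is bounded by $M=2b$). Either way each omitted factor is $\le 1$, so dropping them preserves the inequality; but one should phrase this cleanly so that the bound $\sum_i b_{n,i}^2 = B_n^2$ can be invoked on the full index set rather than on the $t$-dependent subset. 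A second minor point to handle is that $\varphi_\varepsilon$ is the characteristic function of a real random variable with mean $0$ and variance $1$, so $|\varphi_\varepsilon|\le 1$ everywhere, which is what legitimizes discarding factors; none of this requires the Cram\'er condition, only the non-lattice hypothesis, consistent with the statement of the lemma.
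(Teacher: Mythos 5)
Your overall route is the paper's: factor $\varphi_{S_n/B_n}(t)=\prod_i\varphi_\varepsilon(tb_{n,i}/B_n)$, control each factor through the Petrov-type bound of Lemma \ref{lemma1} together with $1-x\le e^{-x}$, and use $\sum_{i}b_{n,i}^2=B_n^2$ to produce a Gaussian dominating function. However, as written there is a gap in the product step, and it comes from your choice of scale. With $2b=M$, Lemma \ref{lemma1} gives $|\varphi_\varepsilon(s)|\le 1-\kappa s^2$ only for $|s|<b=M/2$, while on $\{|t|<\omega_nB_n\}$ the arguments $tb_{n,i}/B_n$ can lie anywhere in $(-M,M)$. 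Bounding the remaining factors by $1$ and dropping them does preserve an inequality, but the exponent you are then entitled to is $-\kappa t^2\sum_{i\in A(t)}b_{n,i}^2/B_n^2$ over the $t$-dependent set $A(t)=\{i:|tb_{n,i}/B_n|<M/2\}$, and this partial sum need not be comparable to $B_n^2$ (it can be an arbitrarily small fraction of it). So your displayed bound with the full sum $\sum_{i\in\mathbb{Z}^d}b_{n,i}^2/B_n^2=1$ in the exponent does not follow from what precedes it; you flag this yourself as the ``main obstacle'' but do not close it.

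The repair is immediate, and it is exactly what the paper does: take $b=M$, i.e.\ set $c=\max_{M\le|s|\le 2M}|\varphi_\varepsilon(s)|<1$ (non-lattice plus continuity of $\varphi_\varepsilon$) and apply Lemma \ref{lemma1} on $[M,2M]$, so that $|\varphi_\varepsilon(s)|\le\exp\bigl(-\tfrac{1-c^2}{8M^2}s^2\bigr)$ holds for every $|s|\le M$, hence for every factor of the product on the indicated $t$-range; then the identity $\sum_i b_{n,i}^2=B_n^2$ applies to the full index set and gives $g(t)=\exp\bigl(-\tfrac{1-c^2}{8M^2}t^2\bigr)$. Alternatively, you may keep $2b=M$ provided you also verify that for $b\le|s|\le 2b$ one still has $|\varphi_\varepsilon(s)|\le c\le\exp\bigl(-(1-c^2)/2\bigr)\le\exp(-\kappa s^2)$, using $\ln(1/c)\ge(1-c^2)/2$ for $c\in(0,1)$; either way the exponential bound must be established on the whole range $|s|\le M$ of possible arguments before the full sum can be placed in the exponent.
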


\begin{proof}
Since we assume that $|\varphi_{\varepsilon}(t)|<1$ for all $t\neq0$, because
$\varphi_{\varepsilon}(t)$ is continuous, there exists $c=c(M)\in(0,1)$ such
that $|\varphi_{\varepsilon}(u)|\leq c$ for $M\leq|u|\leq2M.$ By Lemma
\ref{lemma1} and because of the inequality $1-x\leq e^{-x}$ for all
$x\in{\mathbb{R}}$, we deduce that $|u|\leq M$ implies that
\[
\text{ }|\varphi_{\varepsilon}(u)|\leq1-\frac{1-c^{2}}{8M^{2}}u^{2},
\]
and therefore
\[
|\varphi_{\varepsilon}(u)|\leq\exp\bigg(-\frac{1-c^{2}}{8M^{2}}u^{2}\bigg).
\]
Now, by independence, we have
\[
\varphi_{\frac{S_{n}}{B_{n}}}(t)=\varphi_{\sum\limits_{i\in{\mathbb{Z}}^{d}%
}b_{n,i}\varepsilon_{i}}\bigg(\frac{t}{B_{n}}\bigg)=\prod\limits_{i\in
{\mathbb{Z}}^{d}}\varphi_{\varepsilon}\bigg(\frac{b_{n,i}\ t}{B_{n}}\bigg).
\]
For $|t|<\omega_{n}B_{n}$, we observe that
\[
\bigg|\frac{b_{n,i}\ t}{B_{n}}\bigg|<|b_{n,i}\ \omega_{n}|\le M.
\]
Overall, we have
\begin{align*}
\big|\varphi_{\frac{S_{n}}{B_{n}}}(t)\big|\ \mathcal{I}(|t|<\omega_{n}B_{n})
&  =\prod_{i\in{\mathbb{Z}}^{d}}\bigg|\varphi_{\varepsilon}\bigg(\frac
{b_{n,i}\ t}{B_{n}}\bigg)\bigg|\ \mathcal{I}(|t|<\omega_{n}B_{n})\\
&  \leq\prod_{i\in{\mathbb{Z}}^{d}}\exp\bigg(-\frac{1-c^{2}}{8M^{2}}%
\frac{b_{n,i}^{2}}{B_{n}^{2}}t^{2}\bigg)\\
&  =\exp\bigg(-\frac{1-c^{2}}{8M^{2}}\frac{1}{\sigma_{\varepsilon}^{2}}%
t^{2}\bigg),
\end{align*}
which we take to be our desired dominating integrable function $g(t)$.
\end{proof}

For use in the following lemma, we shall introduce the following notation. For
a countable collection of real numbers $\{b_{j}:j\in{\mathbb{Z}}^{d}\}$, where
$j=(j_{1},...,j_{d})$, we denote an increment in the direction $k$ by
\[
\Delta_{k}b_{j_{1},...,j_{k},...,j_{d}}=b_{j_{1},...,j_{k},...,j_{d}}%
-b_{j_{1},...,j_{k}-1,...,j_{d}}%
\]
and their composition is denoted by $\Delta:$
\begin{equation}
\Delta b_{j}=\Delta_{1}\circ\Delta_{2}\circ...\circ\Delta_{d}b_{j}.
\label{Incr}%
\end{equation}
For instance, if $d=1$ we have $\Delta b_{j}=b_{j}-b_{j-1}.$ For $d=2$,%
\begin{equation}
\Delta b_{i,j}=\Delta_{1}\circ\Delta_{2}b_{i,j}=b_{i,j}-b_{i,j-1}%
-b_{i-1,j}+b_{i-1,j-1}. \label{incr2}%
\end{equation}
Denote $\sum\nolimits_{i\in{\mathbb{Z}}^{d}}a_{i}^{2}=D^{2}<\infty.$ Define as
before $b_{i}=b_{n,i}=b_{i}(n)=\sum\nolimits_{j\in\Gamma_{n}^{d}}a_{j-i}$. For
$k\in{\mathbb{N}},$ and for $j\in{\mathbb{Z}}^{d}$ we denote by $V_{k}(j)$ the
vertices of the cube $\prod\nolimits_{1\leq\ell\leq d}[j_{\ell}-k,j_{\ell}].$

For the proof of the long memory case in Theorem \ref{thm2}, we need the
following lemma about the size of the coefficients $b_{i}=b_{n,i}.$

\begin{lemma}
\label{Lmbn} For any $\ell\in{\mathbb{Z}}^{d}$ and any $k\ge1$ we have
\[
\sum\nolimits_{u\in V_{k}(\ell),u\neq\ell}|b_{u}|\geq\ |b_{\ell}|-2^{d}%
DJ_{n}k^{\frac{d}{2}}\ .
\]

\end{lemma}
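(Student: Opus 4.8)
The plan is to exploit a telescoping identity that expresses the single coefficient $b_\ell$ as a signed sum of the $2^d$ corner values $b_u$, $u\in V_k(\ell)$, plus a remainder that is controlled by the $L^2$-size of the $a_i$'s and by how many rectangles make up $\Gamma_n^d$. The key observation is that $b_i(n)=\sum_{j\in\Gamma_n^d}a_{j-i}$ depends on $i$ through a shift of the summation region, so taking the increment operator $\Delta$ of \eqref{Incr} in the variable $i$ turns the sum over the rectangle $\Gamma_n^d(w)$ into a sum over its boundary layers; more precisely, for a single discrete rectangle $R=\prod_\ell[\underline n_\ell,\overline n_\ell]\cap\mathbb Z^d$, the quantity $\Delta^{(k)}$ applied to $\sum_{j\in R}a_{j-i}$ (the $k$-step analogue of $\Delta$, i.e.\ each $\Delta_\ell$ replaced by the difference over a span of $k$) collapses, by inclusion--exclusion over the $d$ coordinate directions, to a sum of $a$-coefficients supported on a set of cardinality at most $Ck^{d/2}\cdot(\text{something})$; I must be careful to get the count right, and Cauchy--Schwarz against $D=(\sum a_i^2)^{1/2}$ is what converts that count into the bound $2^d D k^{d/2}$ per rectangle.

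Concretely, first I would write out, for $d=1$ and $d=2$ as a warm-up, the identity $b_\ell = \sum_{u\in V_k(\ell),\,u\neq\ell}(\pm)\,b_u \;+\; R_\ell$, where $R_\ell$ is the "full-increment" term $\Delta_{k}b_\ell$ (the alternating sum of all $2^d$ corners, which is the top-order difference). Then I would identify $R_\ell$, for each constituent rectangle $\Gamma_n^d(w)$, with $\sum_{j}\pm a_{j-\ell}$ where $j$ ranges over a difference of boxes; the cancellation in the inclusion--exclusion leaves $a$-coefficients indexed by a set whose size is $O(k^{d/2})$ — here the exponent $d/2$ appears because the surviving index set is essentially the "thin corner" region of a box shifted by $k$ in every direction, whose cardinality after the alternating cancellation is on the order of $k^{d/2}$ rather than $k^{d-1}$ (this is the delicate combinatorial point). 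Summing over the $J_n$ rectangles, the full-increment term for $b_\ell$ is a sum of at most $J_n$ such pieces, each of which, by Cauchy--Schwarz, is at most $\big(\#\text{terms}\big)^{1/2}\big(\sum a_i^2\big)^{1/2}\le 2^{d/2}\,k^{d/2}\cdot? \,D$; tracking the constant carefully yields $|R_\ell|\le 2^d D J_n k^{d/2}$.

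Finally, the triangle inequality gives
\[
|b_\ell|\;\le\;\sum_{u\in V_k(\ell),\,u\neq\ell}|b_u|\;+\;|R_\ell|\;\le\;\sum_{u\in V_k(\ell),\,u\neq\ell}|b_u|\;+\;2^d D J_n k^{d/2},
\]
which is exactly the claimed inequality after rearranging. The main obstacle I anticipate is purely bookkeeping: correctly setting up the $k$-step increment operator so that the telescoping over a discrete rectangle produces precisely the corner set $V_k(\ell)$ (with the right signs) and a remainder whose index set has cardinality $O(k^{d/2})$ rather than a larger power of $k$; once that combinatorial identity is pinned down, the Cauchy--Schwarz step and the sum over the $J_n$ rectangles are routine. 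I would double-check the exponent by testing $d=1$ (where $R_\ell=b_\ell-b_{\ell-k}=\sum_{j}a_{j-\ell}$ over a set of size $k$ per rectangle, giving $\sqrt{k}\,D$ after Cauchy--Schwarz, consistent with $k^{d/2}=k^{1/2}$) and $d=2$ (using \eqref{incr2} with step $k$).
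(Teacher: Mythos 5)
Your overall strategy is the same as the paper's: express $b_\ell$ through the alternating sum of the $2^d$ corner values $b_u$, $u\in V_k(\ell)$, plus an increment remainder; identify that remainder, rectangle by rectangle, with a signed sum of $a$-coefficients over a boundary region; bound it by Cauchy--Schwarz against $D$; sum over the $J_n$ rectangles; and finish with the triangle inequality. (The paper telescopes the unit increments $\Delta b_{u,v}$ over the $k$-box, which is the same identity as your $k$-step full increment.) However, the step you yourself flag as ``the delicate combinatorial point'' is wrong as stated. For a single rectangle, the $k$-step full increment of $b$ at $\ell$ equals $\sum_m \epsilon_m a_{m}$ with $\epsilon_m\in\{-1,0,1\}$, and the support of $\epsilon$ is a union of $2^d$ boxes whose sides are all at most $k$: its cardinality is of order $k^{d}$, not $k^{d/2}$. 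For $d=1$ it consists of (at most) two blocks of $k$ indices each; for $d=2$ of four $k\times k$ corner boxes. There is no cancellation down to a ``thin'' set of size $k^{d/2}$; the exponent $d/2$ in the lemma comes entirely from Cauchy--Schwarz, since a sum of $|a_m|$ over at most $k^d$ distinct indices is at most $k^{d/2}D$, and doing this for each of the $2^d$ boxes gives $2^{d}Dk^{d/2}$ per rectangle, hence $2^{d}DJ_nk^{d/2}$ after summing over the $J_n$ rectangles.

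Your write-up is also internally inconsistent at exactly this point: if the surviving set really had $O(k^{d/2})$ elements, Cauchy--Schwarz would give $O(k^{d/4})D$, whereas the bound $2^{d}Dk^{d/2}$ you then produce from ``$(\#\,\mathrm{terms})^{1/2}D$'' requires a count of order $2^{d}k^{d}$; and your own $d=1$ sanity check (a set of about $k$, in fact up to $2k$, indices, with $\sqrt{k}\,D$ appearing only after Cauchy--Schwarz) already contradicts the claimed cardinality. Once you replace the erroneous count by ``at most $2^d$ boxes, each with at most $k^d$ points, Cauchy--Schwarz on each box separately,'' your argument closes and coincides with the paper's proof; the paper sidesteps the bookkeeping you worry about by noting that each unit increment $\Delta b_{u,v}$, restricted to one rectangle, is exactly an alternating sum of the $2^d$ coefficients $a$ indexed by the corners of the shifted rectangle, and only then summing over the $k^d$ positions $(u,v)$ and applying Cauchy--Schwarz to each of the $2^d$ resulting families of $k^d$ distinct indices.
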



\begin{proof}
In order to avoid complicated notation, we shall (without loss of generality)
prove in detail the case $d=2$, and the general case will follow by a similar
argument. For this case, the increment $\Delta$ is defined by (\ref{incr2}),
$b_{i,j}=\sum\nolimits_{(s,t)\in\Gamma_{n}^{2}}a_{s-i,t-j}$, where $\Gamma
_{n}^{2}$ is as defined in (\ref{defgamma}). Since
\[
\sum_{u=i-k+1}^{i}\;\sum_{v=j-k+1}^{j}\Delta b_{u,v}=b_{i,j}-b_{i,j-k}%
-b_{i-k,j}+b_{i-k,j-k},
\]
we employ the triangle inequality to obtain
\begin{equation}
|b_{i,j}|\leq|b_{i,j-k}|+|b_{i-k,j}|+|b_{i-k,j-k}|+\sum\nolimits_{u=i-k+1}%
^{i}\sum\nolimits_{v=j-k+1}^{j}|\Delta b_{u,v}|.\label{ineqbij}%
\end{equation}
By the linearity of $\Delta$ and the definition of $\Gamma_{n}^{2}$, we notice
that
\[
\Delta b_{u,v}=\Delta\bigg[\sum_{(s,t)\in\Gamma_{n}^{2}}a_{s-u,t-v}%
\bigg]=\Delta\bigg[\sum_{w=1}^{J_{n}}\sum_{(s,t)\in\Gamma_{n}^{2}%
(w)}a_{s-u,t-v}\bigg]=\sum_{w=1}^{J_{n}}\Delta\bigg[\sum_{(s,t)\in\Gamma
_{n}^{2}(w)}a_{s-u,t-v}\bigg].
\]
For fixed $w\in\{1,2,...,J_{n}\}$, let us investigate the expression
$\Delta\bigg[\sum\limits_{(s,t)\in\Gamma_{n}^{2}(w)}a_{s-u,t-v}\bigg]$.
Indeed, after some cancellations, we get
\begin{align*}
\Delta\bigg[ &  \sum\limits_{(s,t)\in\Gamma_{n}^{2}(w)}a_{s-u,t-v}\bigg]\\
&  =\sum_{(s,t)\in\Gamma_{n}^{2}(w)}\big[a_{s-u,t-v}-a_{s-u,t-(v-1)}%
-a_{s-(u-1),t-v}+a_{s-(u-1),t-(v-1)}\big]\\
&  =\sum_{s=\underline{n}_{1}(\omega)}^{\overline{n}_{1}(\omega)}%
\sum_{t=\underline{n}_{2}(\omega)}^{\overline{n}_{2}(\omega)}\big[a_{s-u,t-v}%
-a_{s-u,(t+1)-v}-a_{(s+1)-u,t-v}+a_{(s+1)-u,(t+1)-v}\big]\\
&  =\sum_{s=\underline{n}_{1}(\omega)}^{\overline{n}_{1}(\omega)}%
\big[a_{s-u,\underline{n}_{2}(\omega)-v}-a_{s-u,(\overline{n}_{2}%
(\omega)+1)-v}-a_{(s+1)-u,\underline{n}_{2}(\omega)-v}+a_{(s+1)-u,(\overline
{n}_{2}(\omega)+1)-v}\big]\\
&  =a_{\underline{n}_{1}(w)-u,\underline{n}_{2}(w)-v}-a_{(\overline{n}%
_{1}(w)+1)-u,\underline{n}_{2}(w)-v}+a_{(\overline{n}_{1}(w)+1)-u,(\overline
{n}_{2}(w)+1)-v}-a_{\underline{n}_{1}(w)-u,(\overline{n}_{2}(w)+1)-v}.
\end{align*}
This identity together with the Cauchy-Schwarz inequality, demonstrate that
\[
\sum\nolimits_{u=i-k+1}^{i}\sum\nolimits_{v=j-k+1}^{j}|\Delta b_{u,v}%
|\leq(4Dk)J_{n}.
\]
Therefore, combining this latter inequality with (\ref{ineqbij}), we obtain
\[
|b_{i,j-k}|+|b_{i-k,j}|+|b_{i-k,j-k}|\geq|b_{i,j}|-4DJ_{n}k,
\]
thereby establishing the result for $d=2$. For general $d$, the difference is
that we use the formula (\ref{Incr}) instead of (\ref{incr2}) and we take into
account that, in this case, the number of vertices of the cube $\prod
\nolimits_{1\leq\ell\leq d}[j_{\ell}-k,j_{\ell}]$ is $2^{d}.$
\end{proof}

\begin{lemma}
\label{gamma} Assume that conditions of Theorem \ref{thm2} are satisfied for
the long memory case. Then there exists $0<\rho<1$ independent of $n$, such
that for all $n$ sufficiently large
\[
|\varphi_{S_{n}}(t)|\mathcal{I}(|t|\geq\gamma_{n}^{-1})\leq\rho^{\lfloor
\left(\gamma_{n}/J_{n}\right)^{2/d}\rfloor},
\]
where $\gamma_{n}=\sup_{i\in{\mathbb{Z}}^{d}}|b_{n,i}|.$
\end{lemma}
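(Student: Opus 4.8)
The plan is to bound the characteristic function of $S_n$ on the range $|t|\ge \gamma_n^{-1}$ by exploiting the independence structure in the representation $S_n=\sum_{i}b_{n,i}\varepsilon_i$, so that $|\varphi_{S_n}(t)|=\prod_i|\varphi_\varepsilon(b_{n,i}t)|$. The key idea is that even if a single coefficient $b_{n,i}$ is large, the increment estimate of Lemma \ref{Lmbn} guarantees that a whole cube of nearby coefficients $\{b_u: u\in V_k(\ell)\}$ contributes in a controlled way. Concretely, let $\ell$ be an index with $|b_{n,\ell}|=\gamma_n$, and choose $k=k_n=\lfloor(\gamma_n/(2^{d+1}DJ_n))^{2/d}\rfloor$ (up to the exact constant); then Lemma \ref{Lmbn} forces $\sum_{u\in V_k(\ell),u\neq\ell}|b_u|\ge \gamma_n/2$, hence some $u_0\in V_k(\ell)$ must have $|b_{u_0}|\ge \gamma_n/(2(2^d-1))=:c\gamma_n$. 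Repeating this argument on disjoint translated cubes, one produces roughly $k_n^d \propto (\gamma_n/J_n)^2$ — wait, more carefully, one produces $\lfloor(\gamma_n/J_n)^{2/d}\rfloor$ distinct indices $i$ (taking the $d$-th power is what accounts for the cube volume) each with $|b_i|\ge c\gamma_n$; this is the combinatorial heart of the argument.

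Next I would invoke Lemma \ref{lemma2}: since the innovations satisfy the Cramér condition, for the fixed threshold $\delta = c/1$ adapted to the regime $|t|\ge\gamma_n^{-1}$ — more precisely, when $|t|\ge\gamma_n^{-1}$ and $|b_i|\ge c\gamma_n$ we get $|b_i t|\ge c$, so Lemma \ref{lemma2} with $\delta=c$ yields a constant $\beta=\beta(c)\in(0,1)$ such that $|\varphi_\varepsilon(b_i t)|\le\beta$ for each of these special indices. Multiplying over the $\lfloor(\gamma_n/J_n)^{2/d}\rfloor$ such indices (and bounding $|\varphi_\varepsilon|\le 1$ on all remaining factors) gives
\[
|\varphi_{S_n}(t)|\,\mathcal I(|t|\ge\gamma_n^{-1})\le \beta^{\lfloor(\gamma_n/J_n)^{2/d}\rfloor},
\]
so the claim holds with $\rho=\beta$. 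The hypothesis (\ref{condTh2}), namely $J_n^{2/d}\log(B_n)/\gamma_n^{2/d}\to0$, guarantees $\gamma_n/J_n\to\infty$, so the exponent tends to infinity and the cube $V_{k_n}(\ell)$ stays of bounded index range relative to where $b$ is supported — this is what makes "for all $n$ sufficiently large" meaningful.

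The main obstacle, and the step requiring genuine care, is the extraction of $\lfloor(\gamma_n/J_n)^{2/d}\rfloor$ \emph{distinct} indices with comparably large $|b_i|$, rather than just one. Lemma \ref{Lmbn} as stated only compares $|b_\ell|$ with the sum over one cube of side $k$; to get many separated large coefficients one must iterate: having found $u_0$ with $|b_{u_0}|\ge c\gamma_n$ inside $V_k(\ell)$, apply the lemma again around $u_0$ but on a cube disjoint from the already-used vertices, and so on. Managing the bookkeeping so that the constant $c$ does not degrade across iterations, and so that the total number of extractable indices is exactly of order $k_n^d$ with $k_n\propto(\gamma_n/J_n)^{2/d}$, is the delicate part; one likely packs $\sim k_n^d$ disjoint unit cells into a region of side $\sim k_n$, getting one good index per cell directly from a single application of Lemma \ref{Lmbn} on nested scales. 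A secondary technical point is verifying that the choice of $k_n$ keeps $2^dDJ_nk_n^{d/2}\le\gamma_n/2$ so that Lemma \ref{Lmbn} yields a nonvacuous lower bound; this is exactly where $J_n=o(\gamma_n^{}/\cdot)$-type control from (\ref{condTh2}) is used.
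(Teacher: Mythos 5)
Your overall skeleton matches the intended argument: write $|\varphi_{S_n}(t)|=\prod_i|\varphi_\varepsilon(b_{n,i}t)|$, use Lemma \ref{Lmbn} to locate coefficients of size comparable to $\gamma_n$, apply Lemma \ref{lemma2} (Cram\'er) to make each corresponding factor at most some fixed $\beta<1$, and note that (\ref{condTh2}) forces $\gamma_n/J_n\to\infty$ so the exponent is eventually nontrivial. But the step you yourself flag as ``the delicate part'' --- producing $\lfloor(\gamma_n/J_n)^{2/d}\rfloor$ \emph{distinct} indices (or disjoint groups of factors) each contributing a factor $\le\beta$ --- is exactly the heart of the lemma, and your two sketched routes do not close it. Iterating Lemma \ref{Lmbn} around the newly found vertex $u_0$ degrades the lower bound by a factor $1/(2(2^d-1))$ at each step, so after $m$ iterations you only know $|b_{u_m}|\ge c^m\gamma_n$, which is useless when $m\approx(\gamma_n/J_n)^{2/d}\to\infty$; and the alternative claim of packing $\sim k_n^d$ unit cells with ``one good index per cell'' is not supported by Lemma \ref{Lmbn}, which compares $|b_\ell|$ only with the $2^d-1$ other \emph{vertices} of a single cube and hence yields one good index per application, not one per cell. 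You also waver between needing $k_n$ and $k_n^d$ good indices; only about $k_n\propto(\gamma_n/J_n)^{2/d}$ are needed, and that is what is actually obtainable.

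The paper's resolution is simpler and avoids any iteration or packing: fix $j_0$ with $|b_{j_0}|=\gamma_n$ and apply Lemma \ref{Lmbn} \emph{at every scale} $k=1,\dots,k_0$ with the \emph{same} corner $j_0$, where $k_0=\lfloor((1-\alpha)\gamma_n/(2^dDJ_n))^{2/d}\rfloor$; for each such $k$ one gets $\max_{u\in V_k(j_0),u\neq j_0}|b_u|\ge\alpha\gamma_n/(2^d-1)$, with no degradation since the comparison is always against $|b_{j_0}|$. The sets $V_k(j_0)\setminus\{j_0\}$ are pairwise disjoint across $k$ (a non-corner vertex at scale $k$ has some coordinate equal to $j_{0,\ell}-k$), so setting $\eta_k=\sum_{u\in V_k(j_0),u\neq j_0}b_u\varepsilon_u$ and using independence together with $|\varphi_\varepsilon|\le1$ gives $|\varphi_{S_n}(t)|\le\prod_{k\le k_0}|\mathbb{E}e^{it\eta_k}|\le\prod_{k\le k_0}\min_u|\varphi_\varepsilon(b_ut)|\le\beta^{k_0}$ for $|t|\ge\gamma_n^{-1}$, with $\beta$ from Lemma \ref{lemma2} at threshold $\alpha/(2^d-1)$; one does not even need to single out an index per group, the minimum over the group suffices. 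Finally (\ref{condTh2}) guarantees $k_0\to\infty$ (so $k_0\ge1$ eventually), and the constant $((1-\alpha)/(2^dD))^{2/d}$ in $k_0$ is absorbed into $\rho$. Without this nested-scales/disjoint-vertex-sets observation (or an equivalent device), your proposal does not establish the stated bound.
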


\begin{proof}
To simplify the notation, we will drop the index $n$ and simply write $b_{i}$
in place of $b_{n,i}$. Assume $|b_{j_{0}}|=\sup_{i\in{\mathbb{Z}}^{d}}%
|b_{i}|.$ Such a $j_{0}$ exists, because $\sum\nolimits_{i\in{\mathbb{Z}}^{d}%
}a_{i}^{2}=D^{2}<\infty.$ Fix $\alpha\in(0,1)$, and denote by $k_{0}$ the
integer part of $[(1-\alpha)|b_{j_{0}}|/(2^{d}DJ_{n})]^{2/d}$, namely
\begin{equation}
k_{0}=\bigg\lfloor\left(  \frac{(1-\alpha)|b_{j_{0}}|}{2^{d}DJ_{n}}\right)
^{\frac{2}{d}}\bigg\rfloor. \label{defk0}%
\end{equation}
Since $B_{n}\rightarrow\infty$, condition (\ref{condTh2}) in Theorem
\ref{thm2} implies that $|b_{j_{0}}|\rightarrow\infty$ and $J_{n}=o(\sup
_{i\in\mathbb{Z}^{d}}|b_{n,i}|)$ as $n\rightarrow\infty$. Therefore the
$k_{0}$ in (\ref{defk0}) satisfies $k_{0}\rightarrow\infty$ as $n\rightarrow
\infty$. So, for $n$ sufficiently large, $k_{0}\geq1$.

By Lemma \ref{Lmbn}, for $1\leq k\leq k_{0}$,
\[
\sum\nolimits_{u\in V_{k}(j_{0}),u\neq j_{0}}|b_{u}|\geq|b_{j_{0}}%
|-2^{d}DJ_{n}k^{\frac{d}{2}}\geq\alpha|b_{j_{0}}|,
\]
which immediately gives
\[
\max_{u\in V_{k}(j_{0}),u\neq j_{0}}|b_{u}|\geq\alpha|b_{j_{0}}|/(2^{d}-1).
\]
So, on the set $|t|\geq\gamma_{n}^{-1}=|b_{j_{0}}|^{-1}$
\begin{equation}
\max_{u\in V_{k}(j_{0}),u\neq j_{0}}|tb_{u}|\geq\alpha/(2^{d}-1). \label{maxb}%
\end{equation}
With these preliminaries in place, let us define
\[
\eta_{k}=\sum\nolimits_{u\in V_{k}(j_{0}),u\neq j_{0}}b_{u}\varepsilon_{u}%
\]
and for $u\in V_{k}(j_{0}),u\neq j_{0}$
\[
\varphi_{u}(t)=\varphi_{\varepsilon}(tb_{u}).
\]
By independence
\[
|{{\mathbb{E}}\,}(\exp(it\eta_{k}))|=\Pi_{u}|\varphi_{u}(t)|\leq
\min\nolimits_{u}|\varphi_{u}(t)|,
\]
where the product and the minimum are over $u\in V_{k}(j_{0}),u\neq j_{0}$.
Since the characteristic function of $\varepsilon_{0}$ satisfies the Cramér
condition, by Lemma \ref{lemma2}, for $\alpha/(2^{d}-1)>0$ we can find
$0<\beta=\beta(\alpha)<1$ such that $|\varphi_{\varepsilon}(s)|\leq\beta$ for
all $|s|\geq\alpha/(2^{d}-1).$ As a consequence, for $k\leq k_{0},$ by
(\ref{maxb}), at least one of $|\varphi_{u}(t)|$ is smaller than $\beta,$
i.e.,
\[
\min\nolimits_{u}|\varphi_{u}(t)|\leq\beta\text{ for all }|t|\geq|b_{j_{0}%
}|^{-1}\text{ and }k\leq k_{0}.
\]
It implies that
\begin{equation}
|{{\mathbb{E}}\,}(\exp(it\eta_{k}))|\leq\beta<1\text{ for all }|t|\geq
|b_{j_{0}}|^{-1}\text{ and }k\leq k_{0}. \label{ineqeta}%
\end{equation}
And since
\[
|\varphi_{S_{n}}(t)|\mathcal{=}\prod_{j\in{\mathbb{Z}}^{d}}|\varphi
_{\varepsilon}(b_{j}t)|\text{ }\leq\prod_{1\leq k\leq k_{0}}|{{\mathbb{E}}%
\,}(\exp(it\eta_{k}))|,
\]
by inequality (\ref{ineqeta}) and the definition of $k_{0}$ in (\ref{defk0}),\ it follows that
\[
|\varphi_{S_{n}}(t)|\mathcal{I}(|t|\geq\gamma_{n}^{-1})\leq\text{ }%
\beta^{k_{0}}\le \rho^{\lfloor\left(\gamma_{n}/J_{n}\right)^{2/d}\rfloor}%
\]
for some $\rho\in(0,1)$.
\end{proof}

\paragraph{Proof of Theorem \ref{thm2}}

The proof is based, as usual, on the study of the characteristic function of
the sum $S_{N.}$ As in Hafouta and Kifer (2016),
we prove (\ref{LCLTnonlat}) for all continuous complex-valued functions $h$ defined on
${\mathbb{R}}$, $|h|\in L^{1}({\mathbb{R}})$ such that%

\[
\hat{h}(t)=\int\nolimits_{{\mathbb{R}}}e^{-itx}h(x)dx
\]
is real-valued and has compact support contained in some finite interval
$[-L,L]$. By the inversion formula%
\[
h(t)=\frac{1}{2\pi}\int\nolimits_{{\mathbb{R}}}e^{itx}\hat{h}(x)\ dx.
\]
Employing a change of variables, we see that%

\begin{equation}
{{\mathbb{E}}\,}[h(S_{n}-u)]=\frac{1}{2\pi B_{n}}\int_{{\mathbb{R}}}\hat
{h}\bigg(\frac{t}{B_{n}}\bigg)\;\varphi_{\frac{S_{n}}{B_{n}}}(t)\;\exp
\bigg(-\frac{itu}{B_{n}}\bigg)\ dt. \label{18}%
\end{equation}
By the Fourier inversion formula we also have
\begin{equation}
\exp\bigg(-\frac{u^{2}}{2B_{n}^{2}}\bigg)=\frac{1}{\sqrt{2\pi}}\int
_{{\mathbb{R}}}\exp\bigg(-\frac{itu}{B_{n}}\bigg)\exp\bigg(-\frac{t^{2}}%
{2}\bigg)\ dt. \label{18b}%
\end{equation}
By (\ref{18}) and (\ref{18b})\ and some simple algebraic manipulations, we obtain%

\begin{align}
&  \sup_{u\in\mathbb{R}}\bigg |\sqrt{2\pi}\;B_{n}\;E[h(S_{n}-u)]-\exp
\bigg(-\frac{u^{2}}{2B_{n}^{2}}\bigg)\int_{{\mathbb{R}}}%
h(x)dx\bigg|\;\;\;\;\;\;\;\;\;\;\;\;\label{18c}\\
& \nonumber\\
&  \;\;\;\leq\frac{1}{\sqrt{2\pi}}\int_{{\mathbb{R}}}\bigg|\hat{h}%
\bigg(\frac{t}{B_{n}}\bigg)\;\varphi_{\frac{S_{n}}{B_{n}}}(t)-\exp
\bigg(-\frac{t^{2}}{2}\bigg)\int_{{\mathbb{R}}}h(x)dx\bigg|\ dt.\nonumber\\
& \nonumber
\end{align}
As in Lemma \ref{gamma} denote $\gamma_{n}=\sup_{i\in{\mathbb{Z}}^{d}}%
|b_{n,i}|.$ At this point, note that because we have $\gamma_{n}\leq B_{n}$,
condition (\ref{condTh2})\ implies that $J_{n}=o(B_{n}^{2})$ in the long
memory case. By the CLT in Theorem \ref{thm1}, for all $T>0,$ it follows that%
\[
\int_{|t|\leq T}\bigg|\varphi_{\frac{S_{n}}{B_{n}}}(t)-\exp\bigg(-\frac{t^{2}%
}{2}\bigg)\bigg|\ dt\rightarrow0\text{ as }n\rightarrow\infty.
\]
On the other hand
\[
\int_{|t|\geq T}\exp\bigg(-\frac{t^{2}}{2}\bigg)\ dt\rightarrow0\text{ as
}T\rightarrow\infty.
\]
Since $h$ is integrable, $\hat{h}$ is continuous, and $B_{n}\rightarrow\infty
$, for all $t$
\[
\lim_{n\rightarrow\infty}\hat{h}\bigg(\frac{t}{B_{n}}\bigg)=\int_{{\mathbb{R}%
}}h(x)\ dx.
\]
Combining these facts with (\ref{18c}), we note that, in order to obtain the
conclusion of Theorem \ref{thm2}, it suffices to show that%
\begin{equation}
\lim_{T\rightarrow\infty}\lim\sup_{n}\int_{T\leq|t|\leq LB_{n}}\big|\varphi
_{\frac{S_{n}}{B_{n}}}(t)\big|\ dt=0. \label{to verify}%
\end{equation}

First, we deal with the situation when coefficients are absolutely summable.
Since $Lb_{n,i}$ is uniformly bounded by $L\sum\nolimits_{i\in{\mathbb{Z}}%
^{d}}|a_{i}|$, Lemma \ref{lemma3}, applied with $\omega_{n}=L$ and
$M=L\sum\nolimits_{i\in{\mathbb{Z}}^{d}}|a_{i}|,$ guarantees that the
integrand of (\ref{to verify}) is dominated by some integrable function. In
order to verify (\ref{to verify})\ we have just to apply the Lebesgue
dominated convergence theorem.

Henceforth, we confine our attention to the long memory case. We decompose the
region of integration in (\ref{to verify}), yielding%
\begin{align*}
\int_{T\leq|t|\leq LB_{n}}\big|\varphi_{\frac{S_{n}}{B_{n}}}(t)\big|\ dt  &
\leq\int_{T\leq|t|\leq\gamma_{n}^{-1}B_{n}}\big|\varphi_{\frac{S_{n}}{B_{n}}%
}(t)\big|\ dt+\int_{\gamma_{n}^{-1}B_{n}\leq|t|\leq LB_{n}}\big|\varphi
_{\frac{S_{n} }{B_{n}}}(t)\big|\ dt\\
&  =:I_{1, n}+I_{2, n},
\end{align*}
so that we may deal with $I_{1, n}$ and $I_{2, n}$ separately. In what
follows, our objective is to show that both $I_{1, n}\rightarrow0$ and $I_{2,
n}\rightarrow0$ as $n\rightarrow\infty$.

Since $\gamma_{n}^{-1}b_{n,i}$ is uniformly bounded by one, Lemma \ref{lemma3}
applied with $\omega_{n}=\gamma_{n}^{-1}$ and $M=1\ $guarantees that the
integrand of $I_{1, n}$ is dominated by some integrable function $g(t)$. Ergo,
by the Lebesgue dominated convergence theorem, we have
\[
\lim\sup_{n}I_{1, n}\leq\int_{T\leq|t|}g(t)\ dt\rightarrow0\text{ as
}T\rightarrow\infty,
\]
which is exactly what we wished to show about $I_{1, n}$.

Now we proceed to show that $I_{2,n}\rightarrow0$.
By a change of variable, we deduce that
\[
I_{2,n}= B_{n}\;\int_{\gamma_{n}^{-1}<|t|\leq L}\;\big|\varphi_{S_{n}%
}(t)\big|\ dt.
\]
By Lemma \ref{gamma}
\[
|\varphi_{S_{n}}(t)|\mathcal{I}(|t|\geq\gamma_{n}^{-1})\leq\rho^{\lfloor
\left(\gamma_{n}/J_{n}\right)^{2/d}\rfloor},
\]
and so
\[
I_{2,n}\leq B_{n}(2L)\rho^{\lfloor\left(\gamma_{n}/J_{n}\right)^{2/d}\rfloor}.
\]
It is easy to see that $|I_{2,n}|\rightarrow0$ if we impose (\ref{condTh2}).
\rule{0.5em}{0.5em}



\textbf{Acknowledgement} This paper is dedicated to the memory of Murray Rosenblatt. The authors would like to thank the two referees
for their very valuable comments which improved the presentation of this
paper. The research of Magda Peligrad is partially supported by NSF grant
DMS-1811373. The research of Hailin Sang is partially supported by the Simons
Foundation Grant 586789.\\

\textbf{Data Availability Statement}  The data that support the findings of this study are available from the corresponding author upon reasonable request.

\end{document}